
\documentclass[a4paper,12pt]{article}
\usepackage[warn]{mathtext}
\usepackage{cmap}
\usepackage[T2A]{fontenc}
\usepackage[cp1251]{inputenc}
\usepackage[russian]{babel}

\usepackage{amsmath}
\usepackage{wasysym}
\usepackage{amssymb,amsthm}

\usepackage[dvips]{graphicx}





\topmargin -19mm \oddsidemargin 0.5cm \textwidth 16cm \textheight
24.3cm

\theoremstyle{plain}
\newtheorem{theorem}{Теорема}[section]
\newtheorem{corollary}[theorem]{Следствие}
\newtheorem{lemma}[theorem]{Лемма}
\newtheorem{proposition}[theorem]{Предложение}

\newtheorem{remark}[theorem]{Замечание}

\DeclareMathOperator{\Evol}{Evol}

\theoremstyle{definition}
\newtheorem{definition}[theorem]{Определение}

\hoffset=-1.5cm

\hfuzz=1pt
\relpenalty=1000
\begin{document}

\title{Периодичность схем Рози и подстановочные системы}

\author{А.~Я.~Белов, И.~Митрофанов}

\maketitle

\begin{center}
\bigskip
Московский Государственный Университет им. М.~В.~Ломоносова.\\
Московский институт открытого образования.
\end{center}

\begin{abstract}
В работе вводится понятие {\it последовательности схем Рози} для
бесконечного вправо рекуррентного непериодичного слова. Схемы Рози
близки по своим свойствам графам Рози. Для непериодического
морфического сверхслова, то есть образа при некотором морфизме от
слова, полученного итерациями одной подстановки, доказана
периодичность последовательности схем Рози с некоторого момента.
\end{abstract}

{\bf Ключевые слова:} комбинаторика слов, проблемы Берсайдовского
типа, периодичность.

\begin{abstract}
In the paper the notion of  {\em Rauzy scheme} is introduced. From
Rauzy graph Rauzy Scheme can be obtaining by uniting sequence of
vertices of ingoing and outgoing degree 1 by arches. This notion
is a tool to describe Rauzy graph behavior. For morphic superword
we prove periodicity of Rauzy schemes. This is generalization of
fact that quadratic irrationals have periodic chain fractions.
\end{abstract}

{\bf Keywords:}  combinatorics on words, periodicity,
 Bernside type
problems, periodicity.

\section{Введение.}
Комбинаторика слов находит свое применение в самых разных разделах
математики. Например, в алгебре при изучении базисов и нормальных
форм, в алгебраической топологии, в символической динамике. В \cite{PS} рассматриваются логические аспекты комбинаторики слов. Ряд
проблем, относящихся к комбинаторике слов находится на стыке
алгебры и теории динамических систем. Многие проблемы
комбинаторики слов представляют самостоятельный интерес.

Методы символической динамике играют существенную роль в изучении
комбинаторных свойств слов, задачах теории чисел и теории
динамических систем.  Пусть $M$ --- компактное топологическое
пространство, $U\subset M$ --- его открытое подмножество,
 $f\colon M \to M$ --- гомеоморфизм компакта в себя и $x_0\in M$ --- начальная точка.

 По последовательности итераций можно построить бесконечное слово $W$ над бинарным
алфавитом $\{a,b\}$:
$$
w_n=\begin{cases}
a,&\text{если $f^n(x_0)\in U$;}\\
b,&\text{если $f^n(x_0)\notin U,$}
\end{cases}
$$
которое называется эволюцией точки $x$. Символическая динамика исследует взаи-
мосвязь свойств динамической системы $(M,f)$ и комбинаторных свойств слова $W_n$.

Для слов над алфавитом, состоящим из большего числа символов нужно рассмотреть
несколько характеристических множеств: $U_1,U_2,\dots,U_n$.

Под {\it прямой задачей} символической динамики понимается изучение комбинатор-
ных свойств слов, порожденных данной динамической системой, {\it обратная задача}
символической динамики изучает свойства динамической системы, то есть свойства
компакта $M$ и преобразования $f$ по комбинаторным свойствам слова $W$.

Можно рассуждать и в обратном направлении.
Пусть $W = \{w_n\}$ -- бесконечное слово,
$\tau (\{w_n\})=\{w_{n+1}\}$ ---  оператор сдвига.
 Рассмотрим $X\subseteq A^* $ --- замыкание траектории слова относительно метрики Хэмминга.
 Прямые задачи символической динамики связаны с получением информации о динамической системе
 $(X,\tau)$ по информации о слове $W$.

 Говоря о соответствии комбинаторных свойств слов и топологической динамики,
следует указать на два обстоятельства.

Известно, что если слово $W$ {\it равномерно-рекуррентно (р.р)},
то полученная динамиче- ская система минимальна, то есть не
содержит нетривиальных замкнутых подсистем.

Свойство {\it единственности инвариантной меры} переводится на комбинаторный
язык так. Пусть $u$ есть подслово равномерно-рекуррентного сверхслова $W$. Предпо-
ложим, что для любого подслова $u \sqsubset  W$ верхняя и нижняя плотности, с которыми
оно встречается в слове $W$, совпадают. Тогда соответствующая инвариантная мера
единственная.

Можно сформулировать комбинаторные условия на то, что динамическая систе-
ма является сдвигом тора. В частности, это означает, что у нее дискретный спектр.
Определим {\it функцию рассогласования} $\rho (U; V)$  как верхнюю плотность множества по-
зиций в словах $U$ и $V$, в которых стоят разные символы.

Итак, пусть $W$ сверхслово, полученное сдвигом тора, $T$ --- оператор сдвига. То-
гда функции рассогласования между сдвигами W удовлетворяют следующим свой-
ствам.

\begin{enumerate}
\item Существует последовательность $\{n_i\}^{\infty}_{i=1}$ такая,
что $\rho(T^{n_i}(W),W)\longrightarrow 0.$
\item Существуют сколь угодно большие пары взаимно простых чисел $(n_i,n_j)$,
где $n_i,n_j\longrightarrow \infty$ из
этой последовательности.
\end{enumerate}

Для уточнения абстрактной постановки задачи исследования
соответствия комбинаторных  и топологических свойств слов очень
важна модельная ситуация, от которой можно отталкиваться при
дальнейших исследованиях. Проблематика, связанная с построением
слов Штурма очень важна в комбинаторике слов.

Задачи, прямые и обратные, связанные с преобразованием поворота
окружности приводят к классу слов, называемых {\it словами
Штурма}. Известно, что если $T(n)<n+1$ при некотором $n$, то
сверхслово $W$ периодично. Слова с предельной функцией роста
$T(n)=n+1$ образуют класс так называемых {\it слов Штурма
(Sturmian words)}, другое название -- {\it слова Бетти (Beaty
words)}, которые впервые были упомянуты в работе
\label{HedlundMorse} \cite{MorseHedlund}  Классическая теория слов
Штурма описана в обзорах \label{BerstelSeebold} \cite{BS},
\cite{AdL}. Последние продвижения в теории слов Штурма описаны в
обзоре \label{BerstelRecent} \cite{B}  А.~Т.~Колотов \cite{Kol}
\label{Kol} пришел к изучению таких слов из чистой алгебры.

Известна классическая

\begin{theorem}[Теорема эквивалентности (\cite{MorseHedlund},\cite{L1}).]
Пусть $W$ -- бесконечное рекуррентное слово над бинарным алфавитом
$A=\{a,b\}$. Следующие условия ``почти'' эквивалентны:

\begin{enumerate}
\item Слово $W$ является словом Штурма, то есть количество
различных подслов длины $n$ слова $W$ равно $T_n(W)=n+1$ для
любого $n\geq 1$.

\item Слово не периодично и является {\em сбалансированным}, то
есть для любых двух подслов $u,v\subset W$ одинаковой длины
выполняется неравенство $||v|_a-|u|_a|\leq 1$, где $|w|_a$
обозначает количество вхождений символа $a$ в слово $w$.

\item Слово $W=(w_n)$ является {\em механическим словом} с
иррациональным $\alpha$, то есть существуют такое иррациональное
$\alpha$, $x_0 \in [0,1]$ и интервал $U\subset \mathbb{S}^1$,
$|U|=\alpha$ такие, что выполняется условие:
$$
w_n=\left\{
\begin{array}{rcl}
   a,&{T_{\alpha}}^n(x_0)\in U\\
   b,&{T_{\alpha}}^n(x_0)\not\in U\\
\end{array}\right.
$$

\item Слово $W$ получается путем предельного перехода
последовательности слов, каждое из которых получается из
предыдущего путем подстановки вида $a^kb\to b, a^{k+1}b\to a$ либо
подстановки вида $b^ka\to a, b^{k+1}a\to b$.

Показатель $k$ зависит от шага. Если эти показатели $k_i$
периодически повторяются, то $\alpha$ есть квадратичная
иррациональность.

\item Сверхслово $W$ р.р. и имеет последовательность графов Рози с
одной входящей и одной исходящей развилкой. \footnote{Эта
характеризация была получена в последующем.}

\end{enumerate}
\end{theorem}

Понятие ``почти'' имеет следующий смысл: имеется счетное множество
последовательностей принадлежащих одному классу но не
принадлежащим другому и все такие исключительные
последовательности описаны. Например, при $\alpha\in \mathbb{Q}$
механические слова не принадлежат первому классу.

Во-первых, это рассмотрение {\it сбалансированных} слов над
произвольным алфавитом, а также {\it $m$-сбалансированных} слов.
Сбалансированные непериодические слова над $n$-буквенным алфавитом
изучены в работе \label{Graham} \cite{GR}, см. также \cite{H}. В
работах \cite{BC},\cite{C1} получена конструкция динамической
системы, порождающей произвольное непериодическое сбалансированное
слово.

Описание периодических сбалансированных слов связано с {\it
гипотезой Френкеля (Fraenkel's conjecture)}, утверждающей, что все
сбалансированные периодические слова над алфавитом
$A=\{a_1,\ldots, a_n\}$ из $n$ символов с попарно разными плотностями вхождения имеют вид
$$
W=(U_n)^{\infty},
$$
где $U_n$ задается рекуррентно:
$$
U_n=(U_{n-1}a_nU_{n-1}), \ U_3=a_1a_2a_1a_3a_1a_2a_1.\\
$$
Для трех буквенного алфавита гипотеза была доказана Р.~Тайдеманом
(\cite{T1,T2}). В настоящий момент гипотеза доказана для алфавитов
состоящих не более чем из $7$ символов.

Во-вторых, обобщение слов Штурма может быть получено посредством
изучения {\it функции сложности} или {\it функции роста}. {\it
Функция сложности $T_W(n)$} -- это количество различных подслов
длины $n$ слова $W$. Для слов Штурма выполняется соотношение
$T_W(n+1)-T_W(n)=1$ для всех $n\geq 1$. Поэтому естественными
обобщениями слов Штурма являются слова  над конечном алфавитом для
которых выполняется соотношение $T_W(n+1)-T_W(n)=1$ для всех
$n\geq k$, где $k$ -- некоторое натуральное число. Равносильное
условие: $T(n)=n+K$, начиная с некоторого $n$. Описание таких слов
в терминах поворота окружности было получено в работе \cite{C2}.
Для двубуквенных алфавитов они носят название {\it квазиштурмовых}
слов. Слова с функцией роста, удовлетворяющей соотношению $\lim_{n
\to \infty} T(n)/n = 1$ изучены в работе \label{Aberkane}
\cite{Ab}. Слова с функцией роста, удовлетворяющей соотношению
$\lim_{n\to\infty} T(n)/n = 1$ изучены в работе \cite{Ab}.

Можно рассмотреть графы Рози с большим числом развилок. Им
отвечают слова, у которых функция сложности имеет ассимптотику
вида $an+b$. Слова с функцией сложности $T_W(n)=2n+1$ изучены в
работах P.Arnoux, G.Rauzy (\cite{AR,Ra}), с функцией роста
$T_W(n)=2n+1$ в работе G.Rote \cite{R}. Изучением свойств слов с
линейным ростом числа подслов также производилось школой
V.Berth\'e, S.Ferenczi и Luca Q.Zamboni (\cite{FZ}, \cite{BFZ}).
Продвижение в задачах символической динамики для слов с линейной
функцией роста получено в работе \cite{AR} \label{ArnouxRazy}  В
этой работе построена динамическая система для слов с функцией
роста $T(n)=2n+1$, обладающих дополнительным комбинаторным
свойством. В работе \label{Rote} \cite{R} в терминах эволюции
графов Рози описаны слова с функцией роста $2n$.

Рассмотрение общего случая слов у которых графы Рози имеют большее
число развилок (но все же их число конечно), т.е. слов с линейной
функцией сложности приводит к изучению слов, порождаемых {\bf
перекладыванием отрезков.} Известно, что если перекладывание $k$
отрезков {\it регулярно}, то есть траектория любого из концов
отрезка перекладывания не попадает на конец другого отрезка, то
слово, порождаемое данным перекладыванием, имеет функцию сложности
$T(n)=n(k-1)+1$.

Перекладывания отрезков т.е. кусочно непрерывные преобразования
одномерного комплекса естественным образом служат обобщением
вращения круга. (Сдвиг окружности, по сути, является
перекладыванием двух отрезков с сохранением ориентации.) Эти
преобразования были введены Оселедецом
 \cite{Os}, следовавшим идее В.~И.~Арнольда \cite{Arn}, (см. также \cite{KS}).
Рози \cite{Ra3} впервые показал, что связь между вращениями круга
и последовательностями Штурма обобщается если рассматривать
перекладывания отрезков. В связи с этим (в той же работе) он задал
вопрос описания последовательностей, связанных с перекладываниями
отрезков.

Такие последовательности являются еще одним естественным
обобщением слов Штурма. В частном случае, для $k=3$ отрезков,
описание таких последовательностей было получено в работе
\cite{FHZ}, а работе \cite{FZ2} были изучены частные случаи
последовательностей, порождаемых перекладыванием $4$-х отрезков.
Стоит также отметить работы \cite{AHP,Ba1,Ba2,BMP}.

В случае произвольного числа отрезков также получен ряд интересных
результатов. В работе \cite{FZ} получен комбинаторный критерий на
порождаемость слов, получаемых симметричным перекладыванием
отрезков, то есть перекладыванием, связанным с перестановкой
$(1\to k,2\to k-1,\ldots,k\to 1)$.

Опишем комбинаторный критерий (для общего случая, не обязательно
являющегося регулярным) того, что данное сверхслово является
перекладыванием отрезков \cite{BelovChernInt}. В работе \cite{FZ3}
был независимо  получен другой критерий порождаемости слов
преобразованием перекладывания отрезков, удовлетворяющих
следующему условию: траектория каждой концевой точки отрезка
перекладывания не попадает на концевую отрезка перекладывания, в
том числе сама на себя. В этом случае, как не сложно видеть, слова
будут иметь функцию сложности $T(n)=(k-1)n+1$.

Рассмотрим соответствие между подсловами и подмножествами $M$.
Легко видеть, что если начальная точка принадлежит множеству
$U_i$, то ее эволюция начинается с символа $a_i$. Рассмотрим
образы множеств $U_i$ при отображениях $f^{(-1)}, f^{(-2)},
\ldots$. Ясно, что если точка принадлежит множеству
$$T^{(-n)}(U_{i_n}) \cap T^{-(n-1)}(U_{i_{n-1}})
\cap \ldots \cap T^{(-1)}(U_{i_1}) \cap U_{i_0},$$ то эволюция
начинается со слова $a_{i_0}a_{i_1}\cdots a_{i_n}$.
Соответственно, количество различных существенных эволюций длины
$n+1$ равно количеству разбиений множества $M$ на непустые
подмножества границами подмножеств $\partial U_i$ и их образами
при отображениях $f^{-1}, f^{-2}, \ldots , f^{-n+1}$. Обозначим
через $I_u$ множество разбиения, которое соответствует слову $u$.
Ясно, что специальным подсловам соответствуют те интервалы,
которые делятся образами концов перекладываемых интервалов. Для
данного слова $u$ назовем слово $v$ {\it левым (соответственно,
правым) потомком}, если $u$ -- суффикс (соответственно, префикс)
слова $v$, в соответствии с этим будем называть вершину в $G_n$
левым (соответственно, правым) потомком вершины в $G_k$, $n>k$.
Прообраз конца интервала может являться граничной точкой только
для двух интервалов, соответственно, специальные подслова могут
иметь валентность только равную $2$.

\medskip
{\bf Правило 1.\ }\ {\it  Для того, чтобы бесконечное слово $W$
порождалось системой $(I,T,U_1,\ldots,U_k)$ необходимо, чтобы
любое специальное слово имело валентность $2$. }
\medskip

Таким образом, мы можем наложить условие на эволюцию графов Рози:
начиная с некоторого $k$ все $k$-графы Рози имеют входящие и
исходящие развилки степени $2$. Предположим, что некоторому
подслову $w$ соответствует характеристический интервал, полностью
лежащий внутри интервала перекладывания. Пусть точка $A\in [0,1]$
делит $I_w$ на два интервала, образы которых лежат в  $I_{a_k}$  и
$I_{a_l}$ соответственно, а точка $B\in [0,1]$ -- делит на
интервалы, прообразы которых лежат в $I_{a_i}$ и $I_{a_j}$
соответственно.

Выбор минимального невстречающегося  слова, а, значит, удаляемого
ребра, определяется взаиморасположением точек $A$ и $B$, а также
сохранением или сменой ориентации отображения на этих множествах.
Итого, имеется 8 вариантов, которые разбиваются на четыре пары,
соответствующие одинаковым наборам слов. Например, слову $a_i w
a_k$ соответствует ситуация

$$
B<A, T^{-1}([x_w,B])\subset I_{a_i} , T([x_w,A]\subset I_{a_k} ).
$$

 Граф Рози называется
{\it размеченным}, если

\begin{enumerate}
\item Ребра каждой развилки помечены символами $l$ (``left'') и
$r$ (``right'')

\item Некоторые вершины помечены символом ``--''.
\end{enumerate}

{\it Последователем} размеченного графа Рози назовем
ориентированный граф, являющийся его последователем как графа
Рози, разметка ребер которого определяется по правилу:
\begin{enumerate}

\item Ребра, входящие в развилку должны быть помечены теми же
символами, как и ребра, входящие в любого левого потомка этой
вершины;

\item Ребра, выходящие из развилки должны быть помечены теми же
символами, как и ребра, выходящие из любого правого потомка этой
вершины;

\item Если вершина помечена знаком ``--'', то все ее правые
потомки также должны быть помечены знаком ``--''.

\end{enumerate}

\medskip
{\bf Замечание.} Поясним смысл разметки графа. Пусть ребра
входящей развилки соответствуют $a_i$ и $a_j$, символы $l$ и $r$
соответствуют левому и правому множеству в паре
$(T(I_{a_i}),T(I_{a_j}))$. Если символы $a_k$  и $a_l$
соответствуют ребрам исходящей развилки, то символы $l$ и $r$
ставятся в соответствии с порядком ``лево-право'' в паре
$(I_{a_k},I_{a_l})$. Знак ``--'' ставится в вершине, если
характеристическое множество, ей соответствующее, принадлежит
интервалу перекладывания, на котором меняется ориентация.
\medskip

\medskip
{\bf Правило 2 (условие для перехода от графа $G_n$ к $G_{n+1}$).}
\begin{enumerate}
\item Если в графе нет двойных развилок, соответствующих
биспециальным подсловам, то при переходе от $G_n$ к $G_{n+1}$
имеем $G_{n+1}=D(G_n)$;

\item Если вершина, соответствующая биспециальному слову не
помечена знаком ``--'', то ребра, соответствующие запрещенным
словам выбираются из пар $lr$ и $rl$ \item Если вершина помечена
знаком ``--'', то удаляемые ребра должны выбираться из пары $ll$
или $rr$.
\end{enumerate}

Назовем эволюцию размеченных графов Рози {\it правильной}, если
{\bf правила 1} и {\bf 2} выполняются для всей цепочки эволюции
графов, начиная с $G_1$, назовем эволюцию {\it асимптотически
правильной}, если {\bf правила 1} и {\bf 2} выполняются, начиная с
некоторого $G_n$. Будем говорить, что эволюция размеченных графов
Рози {\it ориентирована}, если в $k$-графах нет вершин, помеченных
знаком ``--''.

\begin{theorem}    \label{ThBelCher}
Равномерно-рекуррентное слово $W$
 \begin{enumerate}
\item  Порождается перекладыванием отрезков, тогда и только тогда,
когда слово обеспечивается асимптотически правильной эволюцией
размеченных графов Рози.

\item  Порождается перекладыванием отрезков  с сохранением
ориентации тогда и только тогда, когда слово обеспечивается
асимптотически правильной ориентированной эволюцией размеченных
графов Рози.
\end{enumerate}
\end{theorem}

Графы Рози представляются комбинаторным языком, наиболее
приспособленным для задания одномерных динамических систем. Ещё
одним комбинаторным языком является язык подстановок.
Подстановочные динамические системы и тесно связанные с ними
$D0L-$системы интенсивно исследовались рядом авторов. В этой связи
следует обратиться к характеризации слов Штурма через свойство
(4). Что можно сказать про динамические системы для произвольной
системы подстановок? В частности, нас интересует случай
периодичности системы подстановок. Для поворота окружности
периодичность может наблюдаться только для случая ранга 2, т.е.
для подстановок собственные значения соответствующей матрицы
которых есть квадратичная иррациональность. Для перекладывания
отрезков в общем случае возможно появление высших
иррациональностей. В этой связи важно уметь переводить свойство
периодичности на язык графов Рози.

Эту возможность предоставляет теорема, доказываемая в настоящей статье:

\begin{theorem}       \label{ThMain}
Язык непериодичного равномерно-рекуррентное слово является
подстановочным тогда и только тогда, когда протокол
детерминированной эволюции его схем Рози периодичен, возможно, с
предпериодом.
\end{theorem}

Обратимся к условиям (4) и (5) для графов Рози слов Штурма. Для
случая одной входящей (соответственно, одной исходящей) развилки
периодичность событий в схеме Рози означает периодичность
разложения в цепную дробь числа $\alpha$.
 Поэтому теорему  \ref{ThMain} можно
рассматривать как обобщение теоремы Лагранжа о периодичности
разложения квадратичной иррациональности в цепную дробь на высшие
иррациональности. В случае размеченных схем Рози для
перекладывания отрезков периодичность также равносильна
подстановочности.

Из теоремы \ref{ThMain} также вытекает обобщение классической
теоремы А.А.Маркова. Имеется счетная последовательность
алгебраических констант $\{\alpha_i\}\to 1/3$ и чисел
$\{\beta_i\}$ такие, что для любого алгебраического числа
$\gamma$, не эквивалентного $\beta_j$ для всех $j<i$ неравенство
$|\gamma-p/q|<1/cq^2$ имеет конечное число решений для всех
$c<\alpha_i$. Величина наилучшего приближения определяется
отношением минимального интервала к максимальному в процессе
алгоритма Евклида на паре отрезков. Индукция Рози обобщает
Алгоритм Евклида. Для перекладывания $\le 4$ отрезков
соответствующие результаты были получены в работе \cite{Ferenci2}.
Отношение длин отрезков равно отношению частот соответствующих
подслов. Назовем такое отношение {\em дискретной частью спектра},
если такое же или меньшие отношения достигаются на счетном
множестве эволюций схем Рози. Аналогично определяется {\em
дискретная часть спектра} для перекладывания отрезков.

\begin{corollary}[Теорема Маркова для произвольного числа развилок]
Дискретная часть спектра достигается на периодических схемах Рози.
Соответствующие отношения суть алгебраические числа. Аналогичный
факт верен для размеченных схем Рози реализующий перекладвание
отрезков.
\end{corollary}

Перекладывания отрезков возникают при изучении потоков на
поверхностях отрицательной кривизны, в частности, из изучения
бильярда с рациональными углами. В этом случае также легко
сооружается подстановочная система. Если осуществлять эволюцию
случайным образом, всякий раз выбирая возможность с вероятностью
$1/2$,  то ситуации для слов Штурма отвечает инвариантная
относительно перехода к следующему остатку Гауссова мера, а для
перекладывания отрезков -- инвариантная относительно индукции Рози
мера на пространствах Тейхмюллера.

Из доказательства теоремы \ref{ThMain} вытекает также решение
известного открытого вопроса \cite{Pr,PS}.

\begin{theorem}
Существует алгоритм проверки  равномерно-рекуррентности
морфического слова $h(\varphi^\infty(s))$.
\end{theorem}

Из теоремы \ref{ThMain} вытекает теорема Вершика-Лившица о
периодичности диаграмм Брателли для марковских компактов,
порожденных подстановочными системами \cite{Vershik2,VL}.

Доказательство теоремы Вершика-Лившица основано на явной
конструкции. Рассмотрим образ
$V_n=\varphi^{(n)}(a)=(\varphi^{(n-2)})(\varphi^{(2)}(a))$ где $a$
есть буква алфавита, $\varphi$ есть некоторый примитивный морфизм
полугруппы слов. $V_n$ состоит из блоков отвечающих применению
$\varphi^{(n-2)}$ к буквам слова $\varphi^{(2)}(a)$. Кроме того,
$V_n$ можно представить в виде произведения блоков отвечающих
применению $\varphi^{(n-1)}$ к буквам слова $\varphi(a)$. Конечные
множества, образующие диаграммы Брателли состоят из
последовательностей пар первого рода (блок, его позиция в блоке на
единицу большего размера), которые отвечают собственным подсловам
блоков на единицу б\'ольшего размера (все блоки одного уровня) а
также последовательности второго рода -- последовательность
первого рода для размера $n$ начинающаяся или заканчивающаяся
последовательностью первого рода для предыдущего размера. При этом
естественная замена этой дополнительной последовательности на пару
(соответствующий блок чьим концом или началом она является, его
позиция) естественным образом приводит к появлению
последовательности первого рода. (Особо надо рассмотреть случай
когда при заполнении возникает блок на единицу большего рода чем
все присутствующие). Естественным образом на этих множествах
вводятся стрелки, означающие что соответствующий объект есть
начальное или концевое подслово другого объекта. Детали
конструкции -- см.  \cite{Vershik2,VL} а также
\cite{LifshicDisser}.

Строгое определение схем Рози вводится в разделе 2, определение
эволюции схем Рози и протокола эволюции -- в разделе 5. Для
некоторых подстановочных систем схемы Рози отвечают графам Рози, в
которых простые пути между развилками заменяются на
ориентированные рёбра длины $1$. В частности, такова ситуация в
случае {\it равноблочных маркированных циркулярных}
DOL-последовательностей, изученном в \cite{Frid}.

В общем случае {\it схема Рози} являет собой граф, различные части
которого взяты из графов Рози разных порядков, на рёбрах графа по
некоторому правилу написаны слова.

Доказательство того факта, что если  сверхслово $W$ имеет
периодичную последовательность схем Рози, то оно порождается
подстановкой, не столь сложное. Главная трудность заключается в
обратной импликации. Подстановка может быть сложно устроена. В
частности, образы различных букв могут содержать друг друга.

Значительно более сложной, чем в теореме Вершика-Лившица, частью
рассуждений является доказательство периодичности схем Рози для
морфических слов. Подстановка может быть плохо устроена: образы
различных букв могут содержать друг друга. Для некоторых
подстановочных систем схемы Рози отвечают графам Рози, в которых
простые пути между развилками заменяются на ориентированные рёбра
длины $1$. В частности, такова ситуация в случае {\em равноблочных
маркированных циркулярных} $DOL$-последовательностей, изученном в
\cite{Frid}. В общем случае {\em схема Рози} являет собой граф,
различные части которого взяты из графов Рози разных порядков, на
рёбрах графа по некоторому правилу написаны слова. Назовем  {\em
весом ребра} в схеме Рози длину соответствующего слова. Ключевым
местом является доказательство того, что {\it если слово $W$
порождено примитивным морфизмом, то отношения весов во всех схемах
Рози ограничены}. (То же верно для р.р. морфических слов.) Далее
можно воспользоваться результатом J.Cassaigne \cite{Cassaigne} о
том что если $W$ равномерно рекуррентно и $\liminf
T_W(n)/n<\infty$, то $\limsup T_W(n+1)-T_W(n)<\infty$ и установить
ограниченность числа вершин в схемах Рози, получающихся путем
элементарной последовательности эволюций из заданной. (Переход к
равномерно рекуррентным словам, порождённых {\it произвольным
морфизмом}, вытекает из работ Ю.~Л.~Притыкина.)

Доказательство ограниченности отношений весов основан на
линейности функции возвращения. В процессе доказательства очень
важно отслеживать, что несравнимым по включению путям отвечают
несравнимые по включению слова.

Следует отметить необходимость изучения динамических систем
размерности два и выше. Обратные задачи символической динамики,
связанные с унипотентным преобразованием тора, изучались в работе
\cite{BK}. В частности, такие слова получаются взятием дробных
частей многочленов со старшим иррациональным коэффициентом в целых
точках, однако обсуждение такого рода моделей выходит за рамки
настоящей статьи.

\section{Основные определения.}

{\it Слово} $u$ над конечным алфавитом $\{a_i\}$ -- это последовательность букв: $a_{i_1}a_{1_2}\dots a_{i_n}$.
Слова бывают {\it конечными} и {\it бесконечными}, бесконечные слова бывают {\it бесконечными вправо}, {\it бесконечными влево} или {\it бесконечными в обе стороны}. Бесконечные слово также будет называться {\it сверхсловом}. На буквах слова можно ввести нумерацию, будем требовать, чтобы в словах, не бесконечных слева, нумерация совпадала с естественной нумерацией от левого конца. Тот факт, что на $i$-том месте в слове $u$ стоит буква $a$, будем обозначать $u[i]=a$.

Для конечного слова определена {\it длина} -- количество букв в нём. Длина слова $u$ также будет обозначаться $|u|$.
Если слово $u_1$ не бесконечно справа, а $u_1$ -- не бесконечно слева, то определена их {\it конкатенация} $u_1u_2$ -- слово, получающееся приписыванием второго к первому справа.

Слово $v$ является {\it подсловом} слова $u$, если $u=v_1vv_2$ для некоторых слов $v_1$, $v_2$. В случае, когда $v_1$ или $v_2$ -- пустое слово, $v$ называется {\it началом} или соответственно {\it концом} слова $u$. На словах существует естественная структура частично упорядоченного множества: $u_1\sqsubseteq u_2$, если $u_1$ является подсловом $u_2$. Будем обозначать $u_1\sqsubseteq_k u_2$, если слово $u_1$ входит в $u_2$ хотя бы $k$ раз.

Бесконечное вправо сверхслово $W$ называется {\it рекуррентным}, если любое его подслово встречается в $W$ бесконечно много раз, иначе говоря, $v\sqsubseteq W\Rightarrow v\sqsubseteq _{\infty}W$.

Бесконечное вправо слово $W$ называют {\it равномерно рекуррентным}, если для любого его подслова $v$ существует такое число $k(v,W)$, что если $u\sqsubseteq W$ и $|u|\geq k(v,W)$, то $v\sqsubseteq u$.

Слово $W$ называется {\it периодичным} с периодом k, если для любого $i$ выполнено $W[i+k]=W[i]$.
Слово называется {\it заключительно периодичным} с периодом $k$, если для любого $i$ начиная с некоторого $N$ выполнено $W[i+k]=W[i]$.

Множество слов $A^{*}$ над алфавитом $A$ можно считать свободным моноидом с операцией конкатенации и единицей -- пустым словом. Отображение $\varphi\colon A^{*}\to B^{*}$ называется {\it морфизмом}, если оно сохраняет операцию моноида. Очевидно, морфизм достаточно задать на буквах алфавита $A$. Морфизм называется {нестирающим}, если образом никакой буквы не является пустое слово.

Если адфавиты $A$ и $B$ совпадают и существует такая буква $a_1$, что $\varphi(a_1)=a_1u$ для некоторого слова $u$ и $\varphi^k(u)$ не является пустым словом ни для какого $k$, то бесконечное слово 
$$
a_1u\varphi(u)\varphi^2(u)\varphi^3(u)\varphi^4(u)\dots
$$
называется {\it чисто морфическим}, пишут $W=\varphi^{\infty}(a_1)$.

Если существует такая степень морфизма $\varphi^k$, что для любых двух букв $a_i$ содержится в $\varphi^k(a_j)$, морфизм называют {\it примитивным}.

Для бесконечного вправо слова $W$ определёны {\it графы Рози}. Граф Рози {\it порядка $k$} обозначается $G_k(W)$, если же понятно, о каком слове идёт речь, то будем писать просто $G_k$.
Вершины его соответствуют всевозможным различным подсловам длины $k$ сверхслова $W$.
Две вершины графа $u_1$ и $u_2$ соединяются направленным ребром, если в $W$ есть такое подслово $v$, что $|v|=k+1$, $v[1]v[2]\dots v[k]=u_1$ и $v[2]v[3]\dots v[k+1]=u_2$.
Если $w$ -- подслово $W$ длины $k+l$, ему соответствует в $G_k$ путь длины $l$, проходящий по рёбрам, соответствующим подсловам слова $w$ длины $k+1$.

{\it Графом со словами} будем называть связный ориентированный граф, у которого на каждом ребре которого написано по два слова -- {\it переднее} и {\it заднее}, а кроме того, каждая вершина либо имеет входящую степень $1$, а исходящую больше $1$, либо входящую степень больше $1$ и исходящую степень $1$. Вершины первого типа назовём {\it раздающими}, а второго -- {\it собирающими}.

{\it Путь} в графе со словами -- это последовательность рёбер, каждое следующее из которых выходит из той вершины, в которую входит предыдущая. {\it Симметричный путь} -- это путь, первое ребро которого начинается в собирающей вершине, а последнее ребро кончается в раздающей.

Каждый путь можно записать словом над алфавитом -- множеством рёбер графа, которое называется {\it рёберной записью пути}. Иногда мы будем отождествлять путь и его рёберную запись. Ребро пути $s$, идущее $i$-тым по счёту, будем обозначать $s[i]$. Для двух путей так же, как и для слов, определены отношения {\it подпути} (пишем $s_1\sqsubseteq s_2$), {\it начала} и {\it конца}.
Кроме того, пишем $s_1\sqsubseteq_k s_2$, если для соответствующих слов $u_1$ и $u_2$ -- рёберных записей путей $s_1$ и $s_2$ --  выполнено $u_1\sqsubseteq_k u_2$.
Если последнее ребро пути $s_1$ идёт в ту же вершину, из которой выходит первое ребро пути $s_2$, путь, рёберная запись которого является конкатенацией рёберных записей путей $s_1$ и $s_2$, будем обозначать $s_1s_2$.

\begin{remark}
Определения пути, подпути, рёберной записи, начала и конца имеют смысл для любых графов ориентированных графов. 
\end{remark}

Введём понятие {\it переднего слова $F(s)$, соответствующего пути $s$ в графе со словами.} Пусть $v_1v_2\dots v_n$ -- рёберная запись пути $s$. В $v_1v_2\dots v_n$ возьмём подпоследовательность: включим в неё $v_1$, а также те и только те рёбра, которые выходят из раздающих вершин графа. Эти рёбра назовём {\it передними образующими для пути $s$}. Возьмём передние слова этих рёбер и запишем их последовательную конкатенацию, там получаем $F(s)$.

Аналогично определяется $B(s)$. В $v_1v_2\dots v_n$ возьмём рёбра, входящие в собирающие вершины и ребро $v_n$ в порядке следования -- это {\it задние образующие для пути $s$}. Тогда последовательной конкатенацией задних слов этих рёбер получается
{\it заднее слово $B(s)$ пути $s$}.

\begin{definition} \label{Def1}
Граф со словами будет являться {\it схемой Рози} для рекуррентного непериодичного сверхслова $W$, если он удовлетворяет следующим свойствам, которые в дальшейшем будут называться {\it свойствами схем Рози}:

\begin{enumerate}
    \item Граф сильносвязен и состоит более чем из одного ребра.
    \item Все рёбра, исходящие из одной раздающей вершины графа, имеют передние слова с попарно разными первыми буквами.
	Все рёбра, входящие в одну собирающую вершину графа, имеют задние слова	с попарно разными последними буквами.
    \item Для любого симметричного пути, его переднее и заднее слова совпадают. То есть можно говорить просто о слове 		симметричного пути.
    \item Если есть два симметричных пути $s_1$ и $s_2$ и выполнено $F(s_1)\sqsubseteq _k F(s_2)$, то $s_1\sqsubseteq _k s_2$.
	\item Все слова, написанные на рёбрах графа, являются подсловами $W$.
	\item Для любого $u$~-- подслова $W$ существует симметричный путь, слово которого содержит $u$.  
	\item Для любого ребра $s$ существует такое слово $u_s$, принадлежащее $W$, что любой симметричный путь, слово которого содержит $u_s$, проходит по ребру $s$.
\end{enumerate}  
\end{definition}

\section{Свойства схем Рози.}

Пусть $S$ -- схема Рози для сверхслова $W$.

\begin{lemma}
\begin{enumerate}
\item Если путь $s_1$ оканчивается в раздающей вершине и в этой же вершине начинается путь $s_2$, то $F(s_1s_2)=F(s_1)F(s_2)$.

\item Если путь $s_1$ оканчивается в собирающей вершине и в этой же вершине начинается путь $s_2$, то $B(s_1s_2)=B(s_1)B(s_2)$.
\end{enumerate}
\end{lemma}

\begin{proof}
1.Множество образующих передних рёбер пути $s_1s_2$ -- это в точности образующие передние рёбра пути $s_1$ и образующие передние рёбра пути $s_2$, записанные последовательно.

2.Множество образующих задних рёбер пути $s_1s_2$ -- это образующие задние рёбра пути $s_1$ и образующие задние рёбра пути $s_2$, записанные последовательно.
\end{proof}

\begin{remark}
При доказательстве этой леммы никак не использовалось, что $S$ -- схема Рози. То есть утверждение верно для любого графа со словами.
\end{remark}

\begin{corollary} \label{C2_3}
Если в графе со словами $S$ выполнено свойство $3$ схем Рози, $s$ -- симметричный путь в $S$, $s_1$ -- произвольный путь в $S$ и $s_1\sqsubseteq s$, то $F(s_1)\sqsubseteq F(s)$. 
\end{corollary}

\begin{lemma}
Если $s_1$ и $s_2$ -- два симметричных пути таких, что $s_1\sqsubseteq _k s_2$, то $F(s_1)\sqsubseteq _k F(s_2)$.
\end{lemma}

\begin{proof}
Пусть $v_1v_2\dots v_n$ -- рёберная запись пути $s_2$, а $w_1w_2\dots w_m$ -- рёберная запись $s_1$.
Для каждого вхождения слова $w_1w_2\dots w_m$ в $v_1v_2\dots v_n$ ту часть $v_1v_2\dots v_n$, которая идёт до последнего ребра вхождения (включительно), назовём {\it путевым началом}, а всё, что после -- {\it путевым концом}. Так как ребро $w_m$ входит в раздающую вершину, то любое путевое начало является симметричным путём, а первое ребро любого путевого конца является в пути $s_2$ передним образующим ребром.
Если $s_b$ и $s_e$ -- путевые начало и конец соответственно, то $F(s_2)=F(s_b)F(s_e)$, при этом $F(s_b)$ оканчивается на $F(s_1)$. Для доказательства леммы достаточно показать, что передние слова всех путевых концов различные.
Для любого путевого конца $s_e$ множество его передних образующих рёбер -- это подмножество передних образующих рёбер пути $s_2$, пересечённое с $s_e$. Но для различных путевых концов такие подмножества вложены одно в другое, а так как для любого путевого конца его первое ребро является образующим, то они вложены строго.
\end{proof}

\begin{remark}
Доказательство леммы проходит для любого графа со словами, для которого выполнено свойство $3$ схем Рози.
\end{remark}

\begin{definition}
Симмметричный путь $s$ называется {\it допустимым}, если $F(s)\sqsubseteq W$.
\end{definition}

\begin{lemma} \label{Lm2_4}
Если $u\sqsubseteq W$, то в схеме $S$ есть допустимый путь $s$ такой, что $u\sqsubseteq F(s)$.
\end{lemma}

\begin{proof}
Пусть $l_{max}$ -- максимальная из длин слов на рёбрах $S$.
Так как сверхслово $W$ рекуррентно, то в нём есть вхождение слова $u$ такое, что первая буква этого вхождения имеет в $W$ номер
 более $l_{\max}$. Рассмотрим $w$ -- подслово $W$, имеющее вид $u_1uu_2$, где $|u_1|>l_{max}$, $|u_2|>l_{max}$.
Согласно свойству $6$, в схеме $S$ существует симметричный путь $s_1$ такой, что $w\sqsubseteq F(s_1)$.
Можно считать, что $s_1$ -- минимальный (относительно $\sqsubseteq $) симметричный путь с таким свойством. 

Пусть $v_1v_2\dots v_n$ -- рёберная запись этого пути. Пусть $v_{n_1}$ -- последнее ребро, являющееся передним образующим для пути $s_1$.
Это либо $v_1$, либо ребро, выходящее из раздающей вершины. Если верно первое, то в пути $s_1$ только одно переднее образующее ребро и $|F(s_1)|\leq l_{\max}$.
Значит, можно рассмотреть симметричный путь с рёберной записью $v_1v_2\dots v_{n_1-1}$.
Из минимальности $s_1$ следует, что $w \npreceq F(v_1v_2\dots v_{n_1-1})$. 

Рассуждая аналогично, получим, что если $v_{n_2}$ -- первое ребро, являющееся задним образующим, то $w \npreceq F(v_{n_2}v_{n_2+1}\dots v_n)$.
Докажем следующий факт: $n_2\leq n_1$. В самом деле, иначе, в силу минимальности $n_2$, среди рёбер $v_1,v_2,\dots v_{n_1}$ ни одно не входит в собирающую вершину, то есть в симметричном пути $v_1v_2\dots v_{n_1-1}$ ровно одно заднее образующее ребро (а именно $v_{n_1-1}$) и, стало быть, $|B(v_1v_2\dots v_{n_1-1})|\leq l_{\max}$. Но 
\[
F(s_1)=F(v_1v_2\dots v_{n_1-1})F(v_{n_1})=B(v_1v_2\dots v_{n_1-1})F(v_{n_1})\leq 2 l_{max}.
\]
Противоречие с тем, что $|F(s_1)|>2 l_{\max}$.

Значит, мы можем рассматривать симметричный путь $s_2=v_{n_2}v_{n_2+1}\dots v_{n_1-1}$. Пусть $w' = F(s_2)$.
Мы можем записать $F(s_1)=B(v_{n_2-1})w'F(v_{n_1})$. Так как слово $F(s_1)$ содержит $w$, а слова $B(v_{n_2-1})w'$ и $w'F(v_{n_1})$ не содержат, то $w'\sqsubseteq w$. Стало быть, $w'\sqsubseteq W$.

С другой стороны, $F(s_1)=u'_1u_1uu_2u'_2$, где $\min\{|u'_1u_1|,|u_2u'_2|\}\geq l_{\max}$.
А так как $\max\{B(v_{n_2-1}),F(v_{n_1})\}\leq l_{\max}$, то $u\sqsubseteq w'$. Следовательно, путь $s_2$ -- искомый.

\end{proof}

\begin{remark} \label{N2_5}
В доказательстве леммы \ref{Lm2_4} не использовалось свойство $7$ схем Рози.
\end{remark}


\begin{lemma} \label{Lmtochn}
Пусть имеется допустимый путь $l$ со словом $u$. Если $uu_1\sqsubseteq W$ для некотогого $u_1$, то в схеме Рози $S$ есть такой допустимый путь $l'$, что его началом является $l$, а его слово начинается с $uu_1$. 
\end{lemma}
\begin{proof}
Согласно лемме {\bf \ref{Lm2_4}} в схеме $S$ существует допустимый путь $l_2$ такой, что $uu_1\sqsubseteq F(l_2)$.
Рассмотрим слово $F(l_2)$. В нём может быть несколько вхождений слова $u$, причём среди них есть такое (допустим, $k-$тое, если считать с конца), после которого сразу идёт $u_1$.

Тогда $l\sqsubseteq_k l_2$. 
Пусть $l_2=s_1ls_2$ (здесь рассматривается $k-$тое с конца вхождение пути $l$).
Рассмотрим $k$-тое с конца вхождение пути $l$ в путь $l_2$.
Первое ребро пути $l$ выходит из собирающей  вершины, а последнее ребро пути $s_2$ входит в раздающую вершину. Следовательно, $ls_2$ -- симметричный путь.
Этот путь является допустимым, так как $B(l_2)=B(s_1)B(ls_2)$ и $B(l_2)\sqsubseteq W$.

Так как $l$ имеет ровно $k$ вхождений в $ls_2$, то, согласно свойству $4$ определения схем Рози \ref{Def1} и лемме {\bf \ref{Lmtochn}}, $u=F(l)$ имеет ровно $k$ вхождений в $F(ls_2)$.
Кроме того, $F(ls_2)$ начинается с $F(l)=u$. Этому свойству удовлетворяет ровно одно окончание слова $F(l_2)$. Значит, $F(ls_2)$ начинается со слова $uu_1$. Стало быть, путь $ls_2$ -- искомый.
\end{proof}

\begin{remark}
Аналогично доказывается следующий факт: пусть имеется допустимый путь $l$ со словом $u$. Если $u_1u\sqsubseteq W$ для некотогого $u_1$, то в схеме Рози $S$ есть такой допустимый путь $l'$, что его концом является $l$, а его слово кончается на $u_1u$.
\end{remark}

\begin{corollary}
Если $u$ -- биспециальное подслово $W$ такое, что оно содержит слово некоторого симметричного пути $l_1$ схемы $S$, то в схеме $S$ существует такой симметричный путь $l$, что $F(l)=u$.
\end{corollary}
\begin{proof} Пусть $u=u_1F(l_1)u_2$. Из биспециальности $u$ следует, что существуют буквы $a_1$ и $a_2$ такие, что $F(l_1)u_2a_1\sqsubseteq W$, $F(l_1)u_2a_2\sqsubseteq W$. Тогда существуют два пути $l_1s_1$ и $l_1s_2$ такие, что слово первого начинается с $F(l_1)u_2a_1$, а второго -- с $F(l_1)u_2a_2$.

Пусть $v_1v_2\dots v_{k-1}v_{k}\dots$ -- рёберная запись $l_1s_1$, а $v_1v_2\dots v_{k-1}v'_{k}\dots$ -- рёберная запись $l_1s_2$, различие происходит в ребре с номером $k$.
Очевидно, рёбра $v_k$ и $v'_k$ выходят из одной и той же вершины. Значит, эта вершина раздающая и путь $v_1v_2\dots v_{k-1}$ -- симметричный.
По свойству $2$ схем Рози (см. определение \ref{Def1}), $F(v_k)$ и $F(v'_k)$ начинаются с разных букв. Так как $F(l_1s_1)$ начинается с $F(v_1v_2\dots v_{k-1})F(v_k)$, а $F(l_1s_2)$ -- с $F(v_1v_2\dots v_{k-1})F(v'_k)$, то $F(v_1v_2\dots v_{k-1})=F(l_1)u_2$. При этом путь $v_1v_2\dots v_{k-1}$ начинается с $l_1$.

Аналогично рассуждая, найдём симметричный путь $w_1w_2\dots w_m$, концом которого является путь $l_1$ и словом которого является $u_1F(l_1)$.
Пусть $w_1w_2\dots w_m=l'l_1$. Тогда путь $l'v_1v_2\dots v_{k-1}$ является искомым.
\end{proof}

\section{Получение схем Рози из графов Рози.}
Пусть $W$ -- рекуррентное непериодичное сверхслово.
Фиксируем натуральное число $k$. Для упрощения дальнейшего считаем, что в
$W$ нет биспециальных слов длины ровно $k$.

Рассмотрим $G_k$ -- граф Рози порядка $k$ для сверхслова $W$. 
\begin{proposition} $G_k$ --
сильносвязный орграф, не являющийся циклом.
\end{proposition}

\begin{proof}
Если $u_1$ и $u_2$ -- подслова $W$ длины $k$, то, в силу рекуррентности сверхслова $W$,
в $W$ найдётся подслово вида $a_{i_1}a_{i_2}\dots a_{i_n}$, где $a_{i_1}a_{i_2}\dots a_{i_k}=u_1$, $a_{i_{n-k+1}}a_{i_{n-k}}\dots a_{i_n}=u_2$.

Тогда слова вида $a_{i_l}a_{i_{l+1}}\dots a_{i_{l+k}}$ имеют длину $k+1$ и соответствуют рёбрам графа $G_k$, образующим путь, соединяющий вершины, соответствующие словам $u_1$ и $u_2$.
Кроме того, если бы $G_k$ был циклом длины $n$, то сверхслово $W$ было бы периодичным с периодом $n$.   
\end{proof}

В силу выбора $k$, в $G_k$ нет вершин с входящей и одновременно исходящей степенью более $1$.
Построим граф со словами $S$, вершинами которого будут специальные вершины графа $G_k$, а рёбра -- простыми цепями, соединяющие специальные вершины графа $G_k$.

\begin{proposition}
Пути в графе $S$ соответствуют путям в $G_k$, начинающимся и кончающимся в специальных вершинах. 
\end{proposition}

Пусть простой путь проходит в графе $G_k$ по вершинам
$$
a_{i_1}a_{i_2}\dots a_{k},\: a_{i_2}a_{i_3}\dots a_{k+1},\: \ldots , a_{i_l}a_{i_{l+1}}\dots a_{i_{l+n-1}},\: \ldots, \:a_{i_{n-k+1}}a_{i_{n-k}}\dots a_{i_n}
$$ 
и соединяет специальные вершины 
$a_{i_1}a_{i_2}\dots a_{k}$ и $a_{i_{n-k+1}}a_{i_{n-k}}\dots a_{i_n}$.
Сопоставим этому пути переднее и заднее слова по следующему правилу:

\begin{enumerate}
\item Если вершина, соответствующая $a_{i_1}a_{i_2}\dots a_{i_k}$, явлется раздающей,
то переднее слово пути -- это $a_{i_{k+1}}a_{i_{k+2}}\dots a_{i_n}$.
\item Если вершина, соответствующая $a_{i_1}a_{i_2}\dots a_{i_k}$, явлется в собирающей,
то переднее слово пути -- это $a_{i_1}a_{i_2}\dots a_{i_n}$.
\item Если вершина, соответствующая $a_{i_{n-k+1}}a_{i_{n-k}}\dots a_{i_n}$, является собирающей,
то заднее слово пути -- это $a_{i_1}a_{i_2}\dots a_{i_n-k}$.
\item Если вершина, соответствующая $a_{i_{n-k+1}}a_{i_{n-k}}\dots a_{i_n}$, является раздающей,
то заднее слово пути -- это $a_{i_1}a_{i_2}\dots a_{i_n}$.
\end{enumerate}

\begin{definition}
Если $S$ -- сильносвязный граф, не являющийся циклом, и $s$ -- путь в графе, то {\it естественное продолжение} пути $s$ {\it вправо} -- это минимальный путь, началом которого является $s$ и который оканчивается в раздающей вершине. 
{\it Естественное продолжение} пути $s$ {\it влево} -- это минимальный путь, концом которого является $s$ и который начинается в собирающей вершине.
\end{definition}

Очевидно, для сильносвязных нецикличных графом естественное продолжение существует всегда и единственно.
Теперь мы готовы написать слова на рёбрах $S$: для каждого ребра в качестве переднего слова берётся переднее слово того пути в $G_k$, который соответствует естественному расширению вправо этого ребра.

Заднее же слово -- это заднее слово пути в $G_k$, соответствующего естественному продолжению влево рассматриваемого ребра. 

\begin{proposition} \label{Prop_1}
В полученном графе сло словами $S$ для любого пути $s$ слово $F(s)$ -- это переднее слово того пути графа $G_k$, который соответствует естественному продолжению вправо пути $s$. Аналогично, $B(s)$ -- это заднее слово того пути в $G_k$, который соответствует естественному продолжннию пути $s$ влево (в графе $S$).
\end{proposition}
\begin{proof}
Действительно, в графе $S$ для любого пути $s$ его естественное продолжение вправо разбивается на естественные продолжения вправо его передних образующих рёбер, причём все естественные продолжения, начиная со второго, выходят из раздающих вершин и, стало быть, слова на них пишутся по правилу для первого из четырёх типов. Значит, слова для этих рёбер в объединении и дадут слово для длинного пути в $G_k$.
\end{proof}

\begin{lemma}      \label{Lm1}
Пусть $S$ -- определённый выше граф со словами. Тогда он является схемой Рози для сверхслова $W$.
\end{lemma}

\begin{proof}

{\bf Свойство 1.}
Так как $G_k$ является сильносвязным и не является циклом, то же самое верно и для $S$.

{\bf Свойство 2.}
Два пути, выходящие из одной раздающей вершины графа $S$ с различными первыми рёбрами, соответствуют путям в графе $G_k$, которые выходят из одной раздающей вершины $a_{i_1}a_{i_2}\dots a_{i_k}$, причём первые рёбра у путей разные.
Пусть вторые вершины путей -- это $a_{i_2}a_{i_3}\dots a_{i_k}b$ и $a_{i_2}a_{i_3}\dots a_{i_k}c$ соответственно.
Тогда слова этих путей начинаются на буквы $b$ и $c$ соответственно, и на эти буквы начинаются слова соответствующих путей в $S$. 

{\bf Cвойство 3} следует из того, что для симметричного пути его естественным продолжением вправо и естественным продолжением влево является он сам.
Согласно \ref{Prop_1}, передним и задним словом такого пути будут соответственно переднее и заднее слово пути
$$
a_{i_1}a_{i_2}\dots a_{k},\: a_{i_2}a_{i_3}\dots a_{k+1},\: \ldots ,
a_{i_l}a_{i_{l+1}}\dots a_{i_{l+n-1}},\: \ldots, \:a_{i_{n-k+1}}a_{i_{n-k}}\dots a_{i_n}
$$ 
графа $G_k$. Переднее слово этого пути определяется по типу $2$, а заднее -- по типу $4$, то есть они совпадают.

Докажем {\bf свойство 4}. Пусть в $S$ нашлись два симметричных пути $s_1$ и $s_2$ такие, что $F(s_1)\sqsubseteq _k F(s_2)$.
Соответственные пути в $G_k$ обозначим $s'_1$ и $s'_2$. Очевидно, достаточно показать, что $s'_1\sqsubseteq _ks'_2$.
пути $s'_1$ и $s'_2$ выходят из собирающих вершин графа $G_k$ и входят в раздающие вершины. Передние слова этих путей -- это $F(s_1)$ и $F(s_2)$. Последовательности их $(k+1)$-буквенных подслов -- это последовательности рёбер $s'_1$ и $s'_2$. Значит, рёберная запись пути $s'_1$ встречается в рёберной записи $s'_2$ хотя бы $k$ раз.

{\bf Свойство 5} достаточно доказать для передних слов. Пусть $v$ -- ребро схемы $S$.
Рассмотрим в $S$ естественное продолжение ребра $v$ вправо. Оно соответствует пути $s$ в $G_k$. Пусть $s$ содержит $L$ рёбер.
Первая вершина этого пути -- слово $a_{i_1}a_{i_2}\dots a_{i_k}$.
Первое ребро пути $s$ соответствует подслову $u$ вида $a_{i_1}a_{i_2}\dots a_{i_k}b$, при этом $a_{i_1}a_{i_2}\dots a_{i_k}b\sqsubseteq W$.
Очевидно, существует $u_1$ -- подслово сверхслова $W$ длины $L+k$, начинающееся с $a_{i_1}a_{i_2}\dots a_{i_k}b$. Ему соответствует путь $s'$ длины $L$ по рёбрам $G_k$ с первым ребром таким же, как у пути $s$.

Так как среди промежуточных вершин пути $s$ нет раздающих вершин (иначе $s$ не соответствует естественному продолжению ребра $v$ вправо), то $s'$ должен совпадать с путём $s$. Стало быль, слово пути $s$ является подсловом $u_1$ и, следовательно, подсловом $W$.

{\bf Свойство 6}. Существует такое число $M$,
что  любом пути длины $M$ по рёбрам $G_k$ будет хотя бы одна раздающая и хотя бы одна собирающая вершина. В самом деле, в качестве $M$ подойдёт количество вершин в $G_k$, иначе существовал бы цикл, в котором не было бы либо раздающих, либо собирающих вершин и $G_k$ был бы либо не сильносвязным, либо циклом.

Пусть $w\sqsubseteq W$. Тогда существует $w'\sqsubseteq W$ вида $w_1ww_2$, где $|w_1|=2M$, $|w_2|=2M$. Рассмотрим в $G_k$ путь, соответствующий слову $w'$.
Среди его последних $M$ вершин есть раздающая, а среди первых $M$ -- собирающая. Значит, можно взять путь, который короче не более, чем на $2M$ и соответствует симметричному пути в $S$. Этот путь соответствует подслову $w'$, которое имеет длину не менее $|w'|-2M$ и, следовательно, содержит $w$.
А это значит, что слово некоторого симметричного пути в $S$ содержит $w$.
 
Докажем {\bf свойство 7}. Пусть $v$ -- ребро в графе $S$. В графе Рози $G_k$ ребру $v$ соответствует путь $v'$, проходящий через вершины
$$
a_{i_1}a_{i_2}\dots a_{i_k},\: a_{i_2}a_{i_3}\dots a_{i_{k+1}},\: \ldots ,
a_{i_l}a_{i_{l+1}}\dots a_{i_{l+n-1}},\: \ldots, \:a_{i_{n-k+1}}a_{i_{n-k}}\dots a_{i_n}.
$$ 
Ни одна из вершин пути кроме первой и последней не является раздающей или собирающей.

Для $S$ уже доказаны свойства $1$--$6$. Согласно замечанию \ref{N2_5}, для $S$ справедлива лемма \ref{Lm2_4}.
Для слова $a_{i_1}a_{i_2}\dots a_{i_k}a_{i_{k+1}}$, являющегося подсловом сверхслова $W$, найдём в $S$ допустимый путь $w$ такой, что $a_{i_1}a_{i_2}\dots a_{i_k}a_{i_{k+1}}\sqsubseteq F(w)$. Соответственный путь в графе $G_k$ содержит первое ребро пути $v'$, а следовательно, и весь $v'$. Стало быть, $w$ содержит $v$, а так как для $S$ выполнено свойство $4$, то любой симметричный путь, содержащий $F(w)$, содержит ребро $v$. 
\end{proof}

\section{Эволюция схем Рози.}
Далее $W$ -- рекуррентное непериодичное сверхслово, $S$ -- схема Рози для этого сверхслова.
Из сильносвязности $S$ следует, что существует ребро $v$, идущее из собирающей вершины в раздающую.
Такие рёбра будем называть {\it опорными}.

Цель этого раздела -- опредедить {\it эволюцию} $(W,S,v)$ схемы Рози $S$ по опорному ребру $v$. Опорное ребро является симметричным путём, стало быть, переднее и заднее слова этого ребра совпадают.

Пусть $\{x_i\}$ -- множество рёбер. входящих в начало $v$, а $\{y_i\}$ -- множество рёбер, идущих из конца $v$. Эти два множества могут пересекаться.
Обозначим $F(y_i)=Y_i$, $B(x_i)=X_i$, $F(v)=V$.

Рассмотрим все слова вида $X_i V Y_j$. Если такое слово не входит
в $W$, то пару $(x_i,y_j)$ назовём {\it плохой}, в противном случае -- {\it хорошей}.

Построим граф $S'$. Он получается из $S$ заменой ребра $v$ на $K_{\#\{x_i\},\#\{y_j\}}$. Более подробно: ребро $v$ удаляется, его начало заменяется на множество из $\{A_i\}$ из $\#\{x_i\}$ вершин так, что для любого $i$ ребро $x_i$ идёт в $A_i$;
конец ребра $v$ заменяется на множество $\{B_j\}$ из $\#\{y_j\}$ вершин так, что для любого $j$ ребро $y_j$ ведёт в $B_j$; вводятся рёбра $\{v_{i,j}\}$, соединяющие вершины множества $\{A_i\}$
с вершинами множества $\{B_j\}$.

По сравнению с $S$, у графа $S'$ нет ребра $v$, но есть новые рёбра $\{v_{ij}\}$. Остальные рёбра графа $S$ имеют соответственные рёбра в $S'$, ребра в первом и во втором графе зачастую будут обозначаться одними и теми же буквами.

На рёбрах $S'$ расставим слова следующим образом. На всех рёбрах $S'$, кроме рёбер из $\{y_i\}$ и $\{v_{i,j}\}$, передние слова пишутся те же, что и передние слова соответственных рёбер в $S$. Для каждого $i$ и $j$, переднее слово ребра $y_j$ в $S'$ -- это $VY_j$. Переднее слово ребра $v_{i,j}$ -- это $Y_j$.
Аналогично, на всех рёбрах, кроме рёбер из $\{x_i\}$ и $\{v_{i,j}\}$, задние слова переносятся с соответствующих рёбер $S$; для всех $i$ и $j$ в качестве заднего слова ребра $x_i$ возьмём $X_iV$, а в качестве заднего слова ребра $v_{i,j}$ -- $X_i$.

Теперь построим граф $S''$. Рёбра $v_{i,j}$, соответствующие плохим парам $(x_i,y_j)$, назовём {\it плохими}, а все остальные рёбра графа $S'$ -- хорошими. Граф $S''$ получается, грубо говоря, удалением плохих рёбер из $S'$.
Более точно, в графе $S''$ раздающие вершины -- подмножество раздающих вершин $S'$, а собирающие вершины -- подмножество собирающих вершин $S'$.
В графе $S'$ эти подмножества -- вершины, из которых выходит более одного хорошего ребра, и вершины, в которые входит более одного хорошего ребра соответственно.
Назовём эти вершины $S'$ неисчезающими. Рёбра в графе $S''$ соответствуют таким путям в $S'$, которые идут лишь по хорошим рёбрам, начинаются в неисчезающих вершинах, заканчиваются в неисчезающих, а все промежуточные вершины которых не являются неисчезающими.

Согласно лемме \ref{Lm4_5}, которая будет доказана ниже, $S''$ -- сильносвязный граф и не цикл. Следовательно, у каждого ребра в $S''$ есть естественное продолжение вперёд и назад. Естественное продолжение вперёд соответствует пути в $S'$; переднее слово этого пути в $S'$ возьмём в качестве переднего слова для соответствующего ребра $S''$. Аналогично для задних слов: у ребра в $S''$ есть естественное продолжение влево, этому пути в $S''$ соответствует путь в $S'$. Заднее слово этого пути и будет задним словом ребра в $S''$.

\begin{definition} Построенный таким образом граф со словами $S''$ назовём {\it элементарной эволюцией} $(W,S,v)$. 
\end{definition}

\begin{theorem} \label{T1}
Пусть $W$ -- схема Рози. Тогда элементарная эволюция $(W,S,v)$ является схемой Рози для сверхслова $W$ (то есть удовлетворяет свойствам $1$---$7$ определения \ref{Def1}).
\end{theorem}
Доказательству этого и будет посвящена основная часть раздела. 

\begin{definition}
Пусть $S$ -- схема Рози для сверхслова $W$. На её рёбрах можно написать различные натуральные числа (или пары чисел). Такую схему мы назовём {\it нумерованной}. Если с рёбер пронумерованной схемы стереть слова, получится {\it облегчённая нумерованная схема}.
\end{definition}

\begin{definition}
{\it Метод эволюции} -- это функция, которая каждой облегчённой пронумерованной схеме (с нумерацией, допускающей двойные индексы) даёт этой же схеме новую нумерацию, такую, что в ней используются числа от $1$ до $n$ для некоторого $n$ по одному разу каждое.
\end{definition}

Зафиксируем какой-либо метод эволюции и далее не будем его менять.

Среди опорных рёбер пронумерованной схемы $S$ возьмём ребро $v$ с наименьшим номером, и совершим эволюцию $(W,S,v)$.
Укажем естественную нумерацию новой схемы.

Напомним, что сначала строится схема $S'$, а потом -- $S''$. В схеме $S'$ все рёбра можно пронумеровать по следующему правилу: рёбра кроме $v$ сохраняют номера, а рёбра вида $v_{ij}$ нумеруют соответствующим двойным индексом.

Каждое ребро в схеме $S''$  -- это некоторый путь по рёбрам схемы $S'$, различным рёбрам из $S''$ соответствуют в $S'$ пути с попарно различными первыми рёбрами. Таким образом, рёбра схемы $S''$ можно пронумеровать номерами первых рёбер соответствующих путей $S'$. Теперь применим к облегчённой нумерованной схеме $S''$ метод эволюции. Получится новая облегчённая нумерованная схема, и в нумерованной (не облегчённой) схеме $S''$ перенумеруем рёбра соответственным образом.

\begin{definition}
Описанное выше соответствие, ставящее нумерованной схеме Рози другую нумерованную схему Рози, назовём {\it детерменированной эволюцией}. Будем обозначать это соответствие $S''=\Evol(S).$
\end{definition}

\begin{definition}
Применяя детерменированную эволюцию к нумерованной схеме $S$ много раз, получаем последовательность нумерованных схем Рози.
Кроме того, на каждом шаге получаем множество пар чисел, задающие плохие пары рёбер. {\it Протокол детермеминованной эволюции} -- это последовательность таких множеств пар чисел и облегчённых нумерованных схем.
\end{definition}

\begin{proposition}
Облегчённая нумерованная схема для $\Evol(S)$ однозначно определяется по облегчённой нумерованной схеме $S$ и множеству пар чисел,
задающие плохие пары рёбер.
\end{proposition}

Сформулируем основной результаты работы:
\begin{theorem}     \label{Th1}
Если сверхслово $W$ является морфическим с примитивным порождающим морфизмам, то протокол детерминированной эволюции с некоторого места периодичен. 
\end{theorem}

В разделе $9$ теорема обобщается на случай произвольного равномерно рекуррентного морфического слова.

Дальнейшая часть раздела посвящена доказательству теоремы \ref{T1}.

Рассмотрим следующее соответствие $f$ между симметричными путями в $S$ за исключением опорного ребра $v$ и симметричными путями в $S'$.
Пусть $s$ -- симметричный путь в $S$ имеет рёберную запись $v_1v_2\dots v_n$. Часть этих рёбер (может быть, ни одно) являются ребром $v$.
Очевидно, если ребро $v$ входит в путь, то оно находится в между рёбрами $x_i$ и $y_j$ для некоторых $i$ и $j$.
Если $v$ является первым или последним ребром $s$, то соответствующее $x_i$ или $y_j$ находится только с одной стороны.
Соответствие $f$ преобразует $v_1v_2\dots v_n$ (рёберную запись $s$) следующим образом:
для каждого вхождения $v$ определяется, находится ли оно на конце рёберной записи или в середине, если на конце, то ``$v$'' просто отбрасывается, а если в середине, то ``$v$'' заменяется на ``$v_{ij}$''.
Индексы $i$ и $j$ берутся те же, что и у рёбер $x_i$ и $y_j$, стоящих рядом с $v$ в рёберной записи $s$. Путь с соответствующей рёберной записью является образом $f(s)$ при соответствии.

Обратное соответствие $f^{-1}$ таково: если  $S'$ есть путь $l$, и его рёберная запись -- $v_1v_2\dots v_n$,
то каждое вхождение ``$v_{ij}$'' заменяется на ``$v$''.
Кроме того, если $v_1$ -- это одно из $\{y_i\}$, то в начало рёберной записи приписывается ``$v$'',
если же $v_n$ -- одно из рёбер $\{x_i\}$, то ``$v$'' приписывается в конец рёберной записи.

Очевидно, $f$ и $f^{-1}$ сохраняют свойство последовательности рёбер ``начало каждого следующего ребра является концом предыдущего'', то есть пути переводят в пути. Кроме того, $f$ и $f^{-1}$ на самом деле являются взаимно обратными. Далее, в графе $S'$ рёбра вида $y_j$ выходят из собирающих вершин, рёбра вида $x_i$ входят в раздающие, рёбра $v_{ij}$ выходят из раздающих вершин и входят в собирающие; в графе $S$ ребро $v$, как опорное, выходит из собирающей и входит в раздающую, рёбра $x_i$ входят в собирающую вершину, а $y_j$ выходят из  раздающей, для всех остальных рёбер графа $S$ свойство выходить из собирающей вершины или входить в раздающую сохраняется в графе $S'$. Таким образом, при соответствиях $f$ и $f^{-1}$ сохраняется симметричность путей.

\begin{proposition} \label{Pr4_2}
При этом соответствии сохраняются слова симметричных путей.
\end{proposition}
\begin{proof}
Докажем этот факт для передних слов, так как для задних всё аналогично. А следовательно, мы докажем, что передние и задние слова симметричных путей в $S'$ совпадают.

Пусть в графе $S$ имеется симметричный путь $s$ с рёберной записью $v_1v_2\dots v_n$.
Разберём два случая.
\begin{enumerate}
\item Пусть первое ребро $v_1=v$, а последнее $v_n \not = v$. Рассмотрим все вхождения ребра $v$:
$$
s = vy_{j_1}\dots vy_{j_2}\dots vy_{j_3}\dots v_n.
$$
Пусть $f(s)=s'$.
$$
s'=y_{j_1}\dots v_{i_2j_2}y_{j_2}\dots v_{i_3j_3}y_{j_3}\dots v_n.
$$
Рассмотрим передние образующие рёбра в $s$ и $s''$. В пути $s$ в парах вида $vy_j$ окажется выбранным ребро $y_j$,
кроме первой пары, где выбраны будут оба ребра. В $s'$ в парах $v_{ij}y_j$ окажутся выбраны рёбра $v_{ij}$,
при этом первое ребро $y_{j_1}$ тоже будет передним образующим. Остальные передние образующие рёбра в обоих путях одни и те же.
Таким образом, 
$$
F(s)=F(v)F(y_{j_1})\dots F(y_{j_2})\dots F(y_{j_3})\dots = VY_{j_1}\dots Y_{j_2}\dots Y_{j_3}\dots
$$
$$
F(s')=F(y_{j_1})\dots F(v_{i_2j_2})\dots F(v_{i_3j_3})\dots = VY_{j_1}\dots Y_{j_2}\dots Y_{j_3}\dots
$$

Фрагменты, отмеченные многоточиями, одинаковы в обоих наборах.
\item Пусть $v_1=v$, $v_n = v$. Аналогично первому случаю, рассмотрим вхождения $v$ в рёберную запись пути $s$.
$$
s = vy_{j_1}\dots vy_{j_2}\dots vy_{j_3}\dots x_{i_k}v.
$$
$$
s'=f(s)=y_{j_1}\dots v_{i_2j_2}y_{j_2}\dots v_{i_3j_3}y_{j_3}\dots x_{i_k}.
$$
Тогда
$$
F(s)=F(v)F(y_{j_1})\dots F(y_{j_2})\dots F(y_{j_3})\dots = VY_{j_1}\dots Y_{j_2}\dots Y_{j_3}\dots
$$
$$
F(s')=F(y_{j_1})\dots F(v_{i_2j_2})\dots F(v_{i_3j_3})\dots = VY_{j_1}\dots Y_{j_2}\dots Y_{j_3}\dots
$$
Как мы видим, слова совпадают.
\item Пусть $v_1 \not = v$, $v_n  = v$. Аналогично, рассмотрим вхождения $v$ в рёберную запись пути $s$.
$$
s = v_1\dots vy_{j_1}\dots vy_{j_2}\dots x_{i_k}v.
$$
$$
s'=f(s)=v_1\dots v_{i_1j_1}y_{j_1}\dots v_{i_2j_2}y_{j_2}\dots x_{i_k}.
$$
Тогда
$$
F(s)=F(v_1)F(y_{j_1})\dots F(y_{j_2})\dots F(y_{j_3})\dots = F(v_1)Y_{j_1}\dots Y_{j_2}\dots Y_{j_3}\dots
$$
$$
F(s')=F(v_1)F(v_{i_1j_1})\dots F(v_{i_2j_2})\dots F(v_{i_3j_3})\dots = F(v_1)Y_{j_1}\dots Y_{j_2}\dots Y_{j_3}\dots
$$
\item Пусть $v_1 \not = v$, $v_n \not = v$. Рассмотрим рёберную запись пути $s$.
$$
s = v_1\dots vy_{j_1}\dots vy_{j_2}\dots v_n.
$$
$$
s'=f(s)=v_1\dots v_{i_1j_1}y_{j_1}\dots v_{i_2j_2}y_{j_2}\dots v_n.
$$
Тогда
$$
F(s)=F(v_1)F(y_{j_1})\dots F(y_{j_2})\dots F(y_{j_3})\dots = F(v_1)Y_{j_1}\dots Y_{j_2}\dots Y_{j_3}\dots
$$
$$
F(s')=F(v_1)F(v_{i_1j_1})\dots F(v_{i_2j_2})\dots F(v_{i_3j_3})\dots = F(v_1)Y_{j_1}\dots Y_{j_2}\dots Y_{j_3}\dots
$$
\end{enumerate}
\end{proof}

\begin{proposition}
Если в графе $S$ два симметричных пути $s_1\not = v$, $s_2\not = v$ и $s_2\sqsubseteq _n s_1$, то $f(s_1)\sqsubseteq _nf(s_2)$.
\end{proposition}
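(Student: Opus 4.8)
The plan is to read the statement off the explicit recipe for $f$ established during its construction and in Proposition~\ref{Pr4_2}: on a non-maximal path $f$ acts as a purely \emph{local recoding of the vertex sequence}. Every interior occurrence of the glued vertex $v$ is replaced by the single edge $v_{ij}$ whose indices are read from its two neighbours $x_i,y_j$, every occurrence of $v$ at an endpoint of the path is deleted, and every vertex distinct from $v$ is left unchanged. Since $\sqsubseteq_n$ on paths means, by definition, that the vertex sequence of the first occurs at least $n$ times as a factor of the vertex sequence of the second, the assertion $f(s_1)\sqsubseteq_n f(s_2)$ becomes a purely combinatorial statement about how this recoding transports the $n$ occurrences supplied by the hypothesis (the $n$ occurrences of the vertex sequence of $s_1$ inside that of $s_2$).

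First I would fix these $n$ distinct occurrences and track each one through $f$. The key remark is that the rule producing the image of a given position depends only on whether that position carries $v$ and, if so, on its two immediate neighbours; for every position strictly interior to a chosen occurrence these neighbours are the same whether the position is read inside $s_1$ by itself or inside $s_2$. Consequently $f$ rewrites the interior of each occurrence letter by letter in exactly the same way as it rewrites the interior of $s_1$, so the two computations can differ only at the two endpoints of the occurrence.

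The only delicate point, which I expect to be the main obstacle, is precisely this boundary discrepancy. If $s_1$ begins (or ends) with $v$, that boundary $v$ is \emph{erased} when $f(s_1)$ is formed on its own, whereas the very same vertex, being interior to $s_2$, is \emph{recoded} to some edge $v_{i'j'}$ inside $f(s_2)$. I would settle this by a direct comparison of words: the image of the chosen occurrence sitting inside $f(s_2)$ is obtained from $f(s_1)$ by at most prepending one symbol $v_{i'j'}$ and/or appending one symbol $v_{i''j''}$, exactly at the ends where $s_1$ starts or ends with $v$, and it coincides with $f(s_1)$ at every other position by the previous paragraph. Hence $f(s_1)$ appears as a factor of $f(s_2)$ inside a one- or two-letter trim of that image, and it does so once for each of the $n$ chosen occurrences.

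It remains to guarantee that the $n$ resulting factors occur at distinct places, which gives $f(s_1)\sqsubseteq_n f(s_2)$. For this I would use that on interior positions $f$ preserves order and position and that, globally, it only removes the at most two end vertices of $s_2$ equal to $v$; thus the start positions of the $s_1$-occurrences, being pairwise distinct in $s_2$, are carried to pairwise distinct start positions of the $f(s_1)$-occurrences in $f(s_2)$. Finally, the hypotheses $s_1\neq v$ and $s_2\neq v$ ensure that $f(s_1)$ and $f(s_2)$ are genuine non-empty paths, so that $\sqsubseteq_n$ is meaningful throughout; collecting these observations yields $f(s_1)\sqsubseteq_n f(s_2)$.
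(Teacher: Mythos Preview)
Your proposal is correct and follows essentially the same line as the paper's own proof: both treat $f$ as a purely local rewriting of the vertex sequence (interior $v$'s recoded to $v_{ij}$ via their neighbours, endpoint $v$'s deleted, all other vertices fixed), track each of the $n$ occurrences through this rewriting, observe that only the two boundary positions of an occurrence can behave differently, and conclude that each occurrence yields a copy of the image of the short path inside the image of the long one.

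Two small remarks. First, you have silently repaired an index slip in the stated hypothesis: as written, $s_2\sqsubseteq_n s_1$ and $f(s_1)\sqsubseteq_n f(s_2)$ point in opposite directions; the paper's own proof in fact argues (as you do) that $f$ preserves the relation $\sqsubseteq_n$, so your reading is the intended one. Second, for the distinctness of the $n$ resulting occurrences the paper uses a slightly cleaner device than your position-map argument: since the short path is $\neq v$, its vertex sequence contains a first and a last vertex different from $v$; these are fixed by $f$ and serve as anchors, so distinct occurrences in the long path produce distinct anchor positions in its $f$-image. Your order-preservation argument works too, but the anchor formulation avoids having to reason about how possible deletions at the endpoints of the long path interact with start positions.
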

\begin{proof}
Напомним, что $v$ -- это выделенное опорное ребро. Рассмотрим рёберные записи путей $s_1$ и $s_2$.
Вторая рёберная запись входит в первую не менее $n$ раз.

Соответствие $f$ делает с этими записями следующее: каждое вхождение ``$v$'' либо отбрасываются, если оно находилось с краю рёберной записи,
либо меняется на некоторою ребро, зависящее только от соседей ``$v$'' в записи справа и слева.

Рассмотрим некое вхождение второй рёберной записи в первую. Все буквы-рёбра, не являющиеся  ``$v$'', при соответствии $f$ не меняются.
Если буква ``$v$'' находится не на конце рёберной записи $s_2$, то в обеих рёберных записях она меняется на одну и ту же букву.
Если ``$v$'' находится в начале или конце второго слова, при соответствии $f$ во втором слове она исчезает.

Следовательно, при отображении $s$ подпоследовательность рёберной записи пути $s_1$, являющаяся рёберной записью $s_2$, переходит в подпоследовательность рёберной записи $s'$, содержащую рёберную запись $s_2$. Кроме того, так как $s_2\not = v$, в рёберной записи $s_2$ есть первое с начала ребро, не меняющееся при отображении $s$ (то есть не являющееся $v$). Для каждого вхождения второй рёберной записи в первую отметим это ребро.
Рассмотрим образ первого слова при соответствии $f$. Отмеченные рёбра передут в отмеченные рёбра, стоящие в различным местах рёберной записи пути $f(s)$.
При этом образ каждого из отмеченных рёбер будет началом рёберной записи $f(s_2)$.
Таким образом, в рёберной записи первого слова будет не менее $n$ подслов, являющихся рёберной записью пути $f(s_2)$.
\end{proof}

Доказанное утверждение вкупе с предложением \ref{Pr4_2} моментально влечёт тот факт, что для графа $S'$ выполняется свойство $4$ схем Рози.

Далее, из построения $S'$ очевидно, что для любой раздающей вершины передние слова рёбер, выходящих из неё, начинаются с различных букв, а задние слова рёбер, входящих в одну собирающую вершину, кончаются на различнные буквы.

\begin{proposition} \label{Pr4_4}
Для графа со словами $S'$ выполнены свойства 1 -- 4 схем Рози.
\end{proposition}
\begin{proof}
Осталось доказать свойство $1$. В силу сильной связности $S$, в $S$ существует цикл, проходящий по всем рёбрам $S$,
а также по всем тройкам рёбер вида $x_ivy_j$. Если применить к нему отображение $f$, то получится цикл в $S'$, проходящий по всем рёбрам.
То, что $S'$ -- не цикл, очевидно, так как у него есть раздающие рёбра.   
\end{proof}

Составим $\{u_i\}$  -- набор подслов сверхслова $W$.
Для каждого ребра $v\in S$ включим в этот набор слово $u_s$ -- слово, соответствующее этому ребру по свойству $7$ для схем Рози.
Кроме того, включим в набор те слова вида $X_i V Y_j$, которые являются подсловами $W$.
В сверхслове $W$ существует подслово $U_1$, содержащее все слова набора $\{u_i\}$.
Пусть $l_{max}$ -- максимальная из длин слов на ребре $S$.
В $W$ найдётся слово $U_2$ вида $U_1wU_1$, где $|w|>10l_{\max}$.

Применим к $U_2$ лемму \ref{Lm2_4}. Получим допустимый путь $s$ с рёберной записью $v_1v_2\dots v_n$.
Слово $F(s)$ имеет вид $w_1U_1wU_1w_2$.
Множество симметричных путей, являющихся началом $s$, можно упорядочить по включению:
$$
s_1\sqsubseteq s_2\sqsubseteq \dots\sqsubseteq s_k=s.
$$

Если $v_{i_1},v_{i_2},\dots,v_{i_k}$ -- передние образующие для пути $s$, то
$$
F(s_j)=F(v_{i_1})F(v_{i_2})\dots F(v_{i_j}).
$$
Значит, $F(s_{i+1})-F(s_i)\sqsubseteq l_{\max}$ для любого $i$.

Рассмотрим минимальное $i$ такое, что $w_1U_1\sqsubseteq F(s_i)$.
Рёберная запись $s_i$ имеет вид $s_i=v_1v_2\dots v_{k_1}$. 
Очевидно, $F(s_i)=w_1U_1w'_1$ для некоторого $w'_1$. При этом $|w'_1|\leq l_{\max}$,
иначе слово $F(s_{i-1})$, являющееся началом $F(s_i)$ длиной не менее $|F(s_i)|-l_{\max}$, содержало бы $w_1U_1$.

Аналогично, существует такой симметричный путь $s'$, что его рёберная запись имеет вид $v_{k_2}v_{k_2+1},\dots v_n$, а его слово
$F(s')=w'_2U_1w_2$, где $|w'_2|\leq l_{max}$.

Итак, у нас есть пути $s_i=v_1v_2\dots v_{k_1}$ и $s'=v_{k_2}v_{k_2+1},\dots v_n$,
являющиеся соответственно началом и концом пути $s=v_1v_2\dots v_n$.
Докажем, что $k_1<k_2$. Предположим противное: $k_1\geq k_2$.
Введём обозначения: $A =B(v_1v_2\dots v_{k_2-1})$, $B=F(v_{k_2}\dots v_{k_1})$ (этот путь симметричен),
$C=F(v_{k_1+1}v_{k_1+2},\dots v_n)$. Тогда слова путей $v_1v_2\dots v_{k_1}$,
$v_{k_2}v_{k_2+1},\dots v_n$ и $v_1v_2\dots v_n$ -- это $AB$, $BC$ и $ABC$ соответственно. Следовательно, $F(s)<F(s_i)+F(s')$.
С другой стороны, 
$$
F(s)=|w_1U_1wU_1w_2|=|w_1U_1w'_1|+|w'_2U_1w_2|+(|w|-|w'_1|-|w'_2|)\geq F(s_i)+F(s')+8l_{\max}.
$$

Мы получили противоречие. Стало быть, верно неравенство $k_2>k_1$.

Так как $U\sqsubseteq F(s_i)$, то путь $s_i$ проходит по всем рёбрам $S$. Докажем, он проходит и по всем тройкам рёбер вида $x_{i_1}vy_{i_2}$ для хороших пар $(x_{i_1},y_{i_2})$. В самом деле: если $(x_{i_1},y_{i_2})$ -- хорошая пара рёбер, то слово $X_{i_1}VY_{i_2}$ является подсловом $U_1$.
С другой стороны, $X_{i_1}VY_{i_2}$ является словом симметричного пути, образованного конкатенацией естественного продолжения влево ребра $x_{i_1}$, ребра $v$ и естественного продолжения вправо ребра $y_{i_2}$. Осталось воспользоваться свойством $4$ схем Рози, применённым к схеме $S$.

То же самое верно и для пути $s'$.

Если симметричный путь проходит по тройке рёбер $x_{i_1}vy_{i_2}$ для плохой пары $(x_{i_1},y_{i_2})$,
то слово этого пути содержит $X_{i_1}VY_{i_2}$ в качестве подслова и путь не является допустимым. Следовательно, путь $s$, как допустимый, не проходит по ``плохим'' тройкам рёбер. 

\begin{remark} \label{N4_6}
Конструкция пути $f(s)$ в графе $S'$ понадобится далее.
\end{remark}

\begin{lemma} \label{Lm4_5}
Граф $S''$ сильносвязен и не цикл. 
\end{lemma}

\begin{proof}
Рассмотрим в графе $S'$ путь $f(s)$. Этот симметричный путь проходит только по хорошим рёбрам графа $S'$.
В этом пути можно выделить начало $f(s_i)$, конец $f(s')$ и среднюю часть, причём начало и конец проходят по всем хорошим рёбрам графа $S'$.
Пусть $d$ и $e$ -- рёбра схемы $S''$. Им соответствуют пути в $S'$.
Ребру $d$ -- путь $d_1d_2\dots d_{n_1}$, ребру $e$ -- путь $e_1e_2\dots e_{n_2}$.
Путь $f(s_i)$ проходит по ребру $d_1$. Из концов каждого из рёбер $d_1,d_2,\dots,d_{n_1-1}$ идёт ровно одно хорошее ребро. Следовательно, в пути $s$ после ребра $d_1$ обязаны идти рёбра $d_2$,$d_3$,\ldots, $d_{n_1}$. Аналогично, в $s'$ есть вхождение ребра $e_{n_2}$,
а перед вхождением $e_{n_2}$ в $f(s)$ обязана идти последовательность рёбер $e_1e_2\dots e_{n_2-1}$.
Часть пути $f(s)$, начинающуюся с $d_1d_2\dots d_{n_1}$ и кончающуюся на $e_1e_2\dots e_{n_2}$, можно разбить на последовательные группы рёбер, образующие рёбра графа $S''$.

Таким образом, по рёбрам графа $S''$ можно добраться от любого ребра до любого другого. Следовательно, граф $S''$ сильносвязен.

Предположим, что $S''$ -- цикл. Тогда из каждой вершины графа $S'$ выходит ровно одно хорошее ребро.
Пусть всего хороших рёбер $n$. Рассмотрим какое-нибудь хорошее ребро, которое в схеме $S'$ выходит из раздающей вершины, и назовём его $d_1$.
Такие рёбра существуют, например, сойдёт ребро вида $v_{ij}$. В схеме $S'$ хорошие рёбра образуют цикл $d_1d_2\dots d_n$.
Возьмём $w_0$ -- произвольное подслово $W$. В $S$ cуществует симметричный путь $v_1v_2\dots v_n$ такой, что $w_0\sqsubseteq F(v_1v_2\dots v_n)\sqsubseteq W$.
Соответствующий путь в $S'$ должен проходит только по хорошим рёбрам -- следовательно, этот путь имеет вид
$d_{begin}(d_1d_2\dots d_e)^kd_{end}$, где участки $d_{begin}$ и $d_{end}$ состоят менее чем из $n$ рёбер.
Пусть переднее слово пути $d_1d_2\dots d_n$ -- это $D$. Тогда $w_0$ является подсловом слова $D_{begin}D^kD_{end}$, где длина слов $D_{begin}$ и $D_{end}$ не превосходит $n l_{max}$.
Пусть $N$ -- длина слова $D$. Тогда любое подслово сверхслова $W$ длины $N$ является циклическим сдвигом $D$ (если $D'$ -- подслово $W$ длины $N$, выберем $w_0$, содержащее $D$ в середине и имеющее длину $N+10\cdot n l_{max}$).
Таким образом, $W$ периодично. Противоречие, значит, $S''$ -- не цикл.
\end{proof}

\begin{proposition}
Для $S''$ выполнено свойство $2$ схем Рози.
\end{proposition}
\begin{proof}
В самом деле, рёбра, выходящие из одной раздающей вершины в графе $S''$, соответствуют путям,
выходящим из одной раздающей вершины графа $S'$ и имеющим различные первые рёбра. Стало быть, передние слова рёбер в $S''$ -- передние слова некоторых путей, выходящих из одной раздающей вершины графа $S'$ и имеющих различные первые рёбра. Передние слова этих путей начинаются с передних слов соответствующих первых рёбер, первые буквы которых, согласно предложению \ref{Pr4_4}, различны.
\end{proof}

Так как симметричные пути в $S''$ являются симметричными путями в $S'$ с теми же словами, а для графа $S'$ выполнены свойства $3$ и $4$, то эти же свойства выполнены и для графа $S''$.

\begin{proposition}
Для $S''$ выполнено свойство $5$.
\end{proposition}

\begin{proof}
Рассмотрим в графе $S'$ симметричный путь $f(s)$, построенный ранее (см. замечание \ref{N4_6}). Слово этого пути -- подслово сверхслова $W$.
Как доказывалось ранее, если ребру $d$ графа $S''$ в $S'$ соответствует путь $d_1d_2\dots d_k$, то $d_1d_2\dots d_k\sqsubseteq f(s)$.
Из следствия \ref{C2_3} имеем $F(d_1d_2\dots d_k)\sqsubseteq F(f(s))$ а также $B(d_1d_2\dots d_k)\sqsubseteq F(f(s))$. Осталось заметить, что $F(f(s))\sqsubseteq W$.
\end{proof}

\begin{proposition}
Для $S''$ выполнено свойство $7$.
\end{proposition}

\begin{proof}
Рассмотрим путь $f(s)$ (см. замечание \ref{N4_6}).
Докажем, что если $l$ -- симметричный путь в $s$ и $F(f(s))\sqsubseteq F(l)$, то для любого ребра $v\in S''$ выполняется $v\sqsubseteq l$.
В самом деле: путь $l$ является симметричным и в графе $S'$. По доказанному ранее свойству $4$ для схемы $S'$, путь $l$ содержит путь $f(s)$. А стало быть, в графе $S''$ он проходит по всем рёбрам.
\end{proof}

Для доказательства теоремы \ref{T1} осталось проверить свойство $6$. Пусть $u$ -- интересующее нас подслово сверхслова $W$.
Каждому ребру графа $S''$ в $S'$ соответствует путь, пусть все такие пути имеют длину не более $N$ рёбер, а слова всех рёбер имеют в $S$ длину не более $l_{\max}$.
Рассмотрим $w_0$ -- подслово в $W$ вида $u_1uu_2$, где $|u_1|=|u_2|=10\cdot N l_{max}$.
В $S$ существует допустимый путь $l$, слово которого содержит $w_0$. Рассмотрим в $S'$ путь $f(l)$. В этом пути содержится более $10\cdot N$ рёбер. Среди последних $N$ рёбер пути есть ребро, из конца которого выходит более одного хорошего ребра. Также среди первых $N$ рёбер пути есть ребро, в начало которого входят хотя бы два хороших ребра. Отрезок пути между этими двумя рёбрами обозначим $r$. Путь $r$ является симметричным, а его слово содержит слово $u$ в качестве подслова. Кроме того, $r$ является симметричным путём в схеме $S''$.

Итак, {\bf теорема \ref{T1} доказана}.

\section {Свойства слов, порождённых примитивными морфизмами.}

\begin{remark} \label{Rem6_1}
Мы можем считать, что примитивный морфизм $\varphi$ таков, что в $a_j\sqsubseteq \varphi(a_i)$ для любых букв $a_i$ и $a_j$ (иначе возьмём соответствующую степень этого морфизма).
\end{remark}

\begin{definition}
{\it Пословнная сложность}  $P(N)$ сверхслова $W$ -- количество различных подслов $W$ длины $N$.
\end{definition}

\begin{definition}
{\it Показатель рекуррентности} $P_2(N)$ для равномерно рекуррентного сверхслова $W$ -- минимальное число $P_2(N)$ такое, что в любом подслове сверхслова
$W$ длины $P_2(N)$ встретятся все подслова $W$ длины $N$. 
\end{definition}

\begin{lemma} \label{Lm6_4}
Для примитивного морфизма $\varphi$ существуют такие $C_1>0$, $C_2>0$, $\lambda_0>1$, что $C_1\lambda_0^k<|\phi^k(a_j)|<C_2\lambda_0^k$ для любых $k$ и буквы $a_j$.
\end{lemma}
\begin{proof}

Образы каждой буквы алфавита под действием морфизма $\varphi$ являются словами, содержащими весь алфавит (см. замечание \ref{Rem6_1}).
Пусть $\varphi(a_i)=A_i$. Рассмотрим матрицу $A$ размера $n\times n$ (где $n$ - мощность алфавита $\{a_i\}$) такую, что $A^i_j$ -- количество букв $a_j$ в $A_i$. Тогда $k$-й столбец матрицы $A^N$ является вектором количеств вхождений букв алфавита в $\varphi^N(a_k)$.

Все элемены матрицы $A$ положительны. По теореме Перрона-Фробениуса, у матрицы $A$ есть $\lambda_0$ -- положительное собственное значение, которое является собственным значением с строго максимальным модулем, причём у этого значения есть собственный вектор $v_0$ с положительными координатами.
Тогда для любого вектора $v$ существует константа $c(v)$ такая, что $A^k(v)=c(v) \lambda_0^k v + \bar o(k)$.
Константа $c(v)$ -- коэффициент при $v_0$ в разложении $v$ по жорданову базису.
При этом для любого ненулевого вектора с неотрицательными координатами $c(v)>0$. Таким образом, существуют такие положительные константы $C_1$ и $C_2$, что для любого $k$ и любой буквы $a_j$ верно двойное неравенство:
$C_1\lambda_0^k<|\phi^k(a_j)|<C_2\lambda_0^k$.
\end{proof}

\begin{lemma}      \label{Lm3}
Если $W$ -- чисто морфическое слово, порождённое примитивным морфизмом $\varphi$, то его показатель рекуррентности $P_2(N)$
ограничен сверху некой линейной фунуцией: $P_2(N)\leq C_3 N$.
\end{lemma}

\begin{proof}
Для любого $k$ сверхслово $W$ можно разбить на $\varphi^k(a_i)$ для различных $a_i$.
Если $u_1$ -- подслово сверхслова $W$ -- имеет длину не менее $2 C_2 \lambda_0^k$, то 
$u$ содержит образ $\varphi^k$ от некоторой буквы. Если $u_2$ -- подслово $W$ -- имеет длину не более $C_1 \lambda_0^k$, то $u_2$ содержится в образе $\varphi^k(a_ia_j)$ для некоторых букв $a_i$ и $a_j$, встречающихся подряд в $W$.

Существует такое натуральное число $m$, что любая пара букв, встречающаяся подряд в $W$, встретится подряд в $\varphi^m(a_1)$.
Следовательно, если для некоторого $k$ длина слова $u_1$ не более $C_1 \lambda_0^k$, а длина $u_2$ не менее $C_2 \lambda_0^{k+m+1}$, то $u_2$ содержит $u_1$, так как $u_2$ содержит $\varphi^{m+k}(a_1)$ и, следовательно, содержит образ $\varphi^k$ от любой встречающейся подряд в $W$ пары букв.

Если отношение длин слов $u_2$ и $u_1$ более $\lambda_0^{m+2}\frac{C_2}{C_1}$, то такое $k$ найдётся.
\end{proof}

\begin{lemma} \label{Lm6_5}
Пусть $W$ -- равномерно рекуррентное сверхслово над алфавитом $A$ с не более, чем линейным показателем рекуррентности $P_2(k)\leq C_3k$,
а $\psi\colon A^*\to A^*$ -- такой морфизм, что образом каждой буквы $a_i$ является либо эта же буква $a_i$, либо пустое слово.
Тогда, если слово $\psi(W)$ не является пустым, оно имеет не более, чем линейный показатель рекуррентности.
\end{lemma}
\begin{proof}
В $W$ должна быть буква, образ которой -- не пустое слово. Пусть $\psi(a_1)=a_1$. Существует такое $M$, что среди любых $M$ подряд идущих букв слова $W$
встречается $a_1$. Тогда всякое подслово $w$ сверхслова $\psi(W)$ является подсловом $\psi(u)$, где $u$ -- некоторое подслово $W$ длины не более $M|w|$.
Следовательно, любое подслово $W$ длины $P_2(M|w|)\leq C_3M|w|$ содержит $w$.
\end{proof}

\begin{lemma} \label{Lm6_6}
Пусть $W$ -- равномерно рекуррентное сверхслово над алфавитом $A$ с не более, чем линейным показателем рекуррентности $P_2(k)$, также путь $B$ -- конечный алфавит, а $\psi\colon A^*\to B^*$ --
нестирающий морфизм, продолжающийся на $W$. Тогда слово $\psi(W)$ является равномерно рекуррентным с не более, чем линейным показателем рекуррентности.
\end{lemma}
\begin{proof}
Пусть $|\psi(a_i)|\leq M$ для любой буквы $a_i$.
Разобьём $\psi(W)$ на образы $\psi(a_i)$. Любое подслово $u_1$ длины $k$ содержится в образе идущих подряд $k$ букв. Если длина $u_2$ -- подслова $\psi(W)$ -- не менее, чем $M(F_2(k)+1)$, то $u_2$ содержит образ $\psi$ от $F_2(k)$ идущих подряд букв сверхслова $W$, и, следовательно, содержит $u_1$.
\end{proof}

\begin{corollary} Пусть $W$ -- равномерно рекуррентное сверхслово над алфавитом $A$ с не более, чем линейным показателем рекуррентности,
$B$ -- конечный алфавит, а $\psi\colon A^*\to B^*$ -- произвольный морфизм, продолжающийся на $W$. Тогда $\psi(W)$ имеет не более чем линейный показатель рекуррентности.
\end{corollary}

\begin{lemma} \label{Lm6_7}
Если $W$ -- сверхслово с не более, чем линейным показателем рекуррентности, то пословная сложность $W$ также не более чем линейна.
\end{lemma}
\begin{proof}
Пусть $F_2(k)\leq C_3k$. Рассмотрим $u$ -- подслово $W$ длиной $C_3k$. В нём должны присутствовать все подслова $W$ длины $k$, но в $u$ таких подслов не более, чем $C_3k-k+1$. 
\end{proof}

\begin{lemma} \label{compl}
Если для сверхслова $W$ существует такая константа $C$, что для всех $N$ выполнено $P(N)\leq C N$, то функция $P(N+1)-P(N)$ ограничена. 
\end{lemma}

Доказательство приведено в работе \cite{Cassaigne}

\begin{lemma} \label{cas}
Если сверхслово $W=\psi(\varphi^{\infty}(a_1))$, где морфизм $\varphi$ примитивный, то существует такая константа $C_4$, что во всех графах Рози $G_k(W)$ количество вершин суммарной степени более $2$ не превосходит $C_4$.
\end{lemma}
\begin{proof}
К сверхслову $W$ применимы леммы \ref{Lm6_4} --- \ref{compl}.

Для каждого подслова $W$ длины $n$ в $W$ существует хотя бы одно подслово длины $n+1$ с тем же началом. Если же слову в графе $G_n$ соответствует вершина, у которой исходящая степень более одного, то в $W$ есть хотя бы два подслова с тем же началом.
Следовательно, для каждого $n$ число $P(n+1)-P(n)$ не менее числа вершин $G_n$ с исходящей степенью более единицы. Аналогично, $P(n+1)-P(n)$ не менее количества вершин входящей степени более единицы. Из того, что $P(n+1)-P(n)$ ограничено, следует утверждение леммы.
\end{proof}

\begin{lemma} \label{sdvig}
Если для равномерно рекуррентного непериодичного $W$ показатель рекуррентности $P_2(k)\leq Ck$, то существует такое $C_5>0$, что если $u\sqsubseteq W$,
то для любых двух различных вхождений $u$ в $W$ их левые концы находятся на расстоянии не меньшем, чем $C_5|u|$.
\end{lemma}
\begin{proof}
Пусть в $W$ есть два вхождения $u$ длины $k$, а их левые концы находятся на расстоянии $l<\frac{k}{2 C}$.
$$
\lefteqn{\overbrace{
\phantom{a_1\dots a_{l}a_{l+1}a_{l+2}\dots a_{k'-1}a_{k'}}}^{u}}
a_1\dots a_{l}\underbrace{a_{l+1}a_{l+2}\dots a_{k'-1}a_{k'}a_{k'+1}\dots a_{k'+l}}_{u}
$$
По очереди рассматривая оба вхождения слова $u$, приходим к выводу, что фрагмент $a_1\dots a_{l}$ повторится подряд как минимум $[\frac{k_1}{l}]$ раз, что превышает $C$. Любое подслово $W$ длины $l$ является подсловом слова $(a_1a_2\dots a_l)^C$, что немедленно влечёт периодичность $W$ и противоречие.
\end{proof}

\section{Свойства схем Рози для слов, порождённых примитивными морфизмами.}
Во всём разделе $W=\psi(\varphi^{\infty}(a_1))$, где $\psi$ -- произвольный морфизм, $\varphi$ -- примитивный морфизм, $W$ не является заключительно периодичным, а $S$ -- схема Рози для сверхслова $W$.

\begin{definition}
{\it Масштаб схемы} -- наименьшая из длин слов опорных рёбер.
\end{definition}

Возьмём собирающие вершины этой схемы. Для каждой из них возьмём ребро, выходящее из вершины, и его естественное продолжение вперёд. Получится набор симметричных путей $\{s_i\}$ схемы $S$. Рассмотрим раздающие вершины схемы $S$. Для каждой из них возьмём естественное продолжение назад ребра, входящего в вершину. Получим набор симметричных путей $\{t_i\}$.

\begin{lemma}      \label{Lm4}
Существует такое $C_6$, что для любых двух путей $s_1$, $s_2$ из $\{s_i\}$, длины слов $F(s_1)$ и $F(s_2)$ отличаются не более, чем в $C_6$ раз.
\end{lemma}
\begin{proof}
Так как $F(s_1)$ является передним словом первого ребра пути $s_1$, то $F(s_1)\sqsubseteq W$. Аналогично, $F(s_2)\sqsubseteq W$.
Ровно одно из рёбер пути $s_1$ входит в раздающую вершину. То же самое верно и для $s_2$.
Следовательно, случаи $s_1\sqsubseteq _2s_2$ или $s_2\sqsubseteq s_2s_1$ невозможны. Значит, по свойству $4$ схем Рози, в $F(s_1)$ может быть не более одного вхождения $F(s_2)$ и наоборот.
Для слова $W$ выполняется $P_2(n)\leq C_3n$ для некоторого $C_3$ (см. лемму \ref{Lm3}). Если бы длины слов $F(s_1)$ и $F(s_2)$ различались более, чем в $2C_3$ раз, то одно из слов (например, $F(s_1)$) можно было бы разбить на два слова, каждое из которых содержало бы $F(s_2)$.
\end{proof}

\begin{remark}
Также для любых путей из $\{t_i\}$ отношение длин их слов не превосходит $C_6$. Доказательство полностью аналогично.
\end{remark}

\begin{corollary} \label{7_4}
Если $M$ -- масштаб схемы $S$, то для любого пути из $\{s_i\}$ или $\{t_i\}$ длина его слова не превосходит $C_6M$.
\end{corollary}

\begin{lemma}      \label{Lm5}
Для любого пути из $\{s_i\}$, его слово является в $W$ специальным слева и
оканчивается на некоторое биспециальное слово длины не менее $M$, где $M$ -- масштаб схемы.
\end{lemma}
\begin{proof}
Пусть рассматриваемый путь имеет рёберную запись $v_1v_2\dots v_n$. Тогда ребро $v_n$ выходит из собирающей вершины, иначе естественное продолжение ребра $v_1$ было бы короче хотя бы на одно ребро. Ребро $v_n$ является опорным.

Докажем, что слово любого опорного ребра $v$ является биспециальным в $W$. Докажем, например, что $F(v)$ является специальным справа. Пусть из правого конца $v$ выходят $y_1$ и $y_2$. По свойству $7$ схем Рози для ребра $y_1$ есть $u_{y_1}$ -- подслово слова $W$ такое,
что любой симметричный путь, слово которого содержит $u_{y_1}$, проходит по ребру $y_1$. По лемме \ref{Lm2_4},
в схеме $S$ есть допустимый путь, слово которого содержит $u_{y_1}$. Рёберная запись этого допустимого пути обязана содержать $vy_1$.
Следовательно, в слово этого пути входит переднее слово пути $vy_1$, стало быть, $F(vy_1)\sqsubseteq W$.

Аналогично доказвается, что $F(vy_2)\sqsubseteq W$. Так как $F(vy_i)=F(v)F(y_i)$ и слова $F(y_i)$ имеют различные первые буквы по свойству $2$ схем Рози, $v$ является специальным справа. Специальность слева доказывается абсолютно так же.

Таким образом, $F(v_n)$ -- биспециальное. То, что $|F(v_n)|\geq M$, очевидно.

Теперь докажем, что слово пути $v_1v_2\dots v_n$ является специальным слева. Пусть $x_1$ и $x_2$ входят в начало ребра $v_1$. Для ребра $x_1$ есть $u_{x_1}$ -- такое подслово $W$, что любой симметричный путь, слово которого содержит $u_{x_1}$, содержит $x_1$.
В $S$ существует допустимый путь $l_{x_1}$, слово которого содержит $u_{x_1}$. Этот путь проходит по ребру $x_1$. Следующие $n$ рёбер этого пути могут быть только рёбра $v_1,v_2,\dots v_n$. Рассмотрим слово $F(l_{x_1})$: $B(x_1)F(v_1v_2\dots v_n)\sqsubseteq B(l_{x_1})$.
Значит, $B(x_1)F(s_1s_2\dots s_n)\sqsubseteq W$.

Аналогично, $B(x_2)F(v_1v_2\dots v_n)\sqsubseteq W$. Пользуясь свойством $2$ схем Рози для $B(x_1)$ и $B(x_2)$, получаем, что $F(v_1v_2\dots v_n)$ является специальным слева.
\end{proof}

Аналогично доказывается соответствующий факт для путей $\{t_i\}$.

\begin{lemma}      \label{fin}
Существует константа $C_7$ такая, что в любой схеме Рози для $W$ количество вершин не превосходит $C_7$.
\end{lemma}

\begin{proof}
Достаточно показать, что для любой схемы $S$ количество элементов в $\{s_i\}$ и $\{t_i\}$ ограничено константой, зависящей только от сверхслова $W$.
Докажем для $\{s_i\}$, так как для $\{t_i\}$ аналогично.

Пусть $M$ -- масштаб схемы $S$.
Рассмотрим для $W$ граф Рози $G_{k}$, где $k = [\frac M2]$. Пусть $s_1\in \{s_i\}$.
Из \ref{7_4}, его слово $F(s_1)$ имеет длину, не превосходящую $C_6M$.
Из леммы \ref{Lm5} следует, что последние $k$ букв этого слова являются словом, специальным справа, а первые $k$ букв -- словом, специальным слева.
При этом $F(s_1)$ является подсловом $W$. Все подслова в $F(s_1)$ длины $k+1$ образуют путь в $G_k$, ведущий из собирающей вершины в раздающую.
Пусть в этом пути обнаружился цикл длины $l$.

Это значит, что в $W$ есть подслово длины $k+l$, первые $k$ букв которого образуют то же слово, что и последние $k$ букв.
Применяя лемму \ref{sdvig}, получим $l\geq C_5k$.

Слову $f(s_1)$ соответствует путь по рёбрам графа $G_{k}$ из одной специальной вершины в другую,
этот путь не может попадать в одну и ту же вершину $G_k$ менее, чем через $C_5k$ шагов, а его длина не превосходит $C_6M$.
Значит, одну и ту же вершину графа $G_k$ этот путь посетит не более $\frac{C_6M}{C_5k}$ раз.

Из леммы \ref{cas}, в графе $G_k$ может быть не более $C_4(W)$ специальных вершин. Тогда посещений специальных вершин у пути будет не более
$\frac{C_6MC_4(W)}{C_5k}$.
Если $|B|$ - мощность нашего алфавита, то всего различных путей, содержащих не более $\frac{C_6M}{C_5k}$ (с учётом количества вхождений) специальных вершин, выходящих из специальной вершины и входящих в специальную вершину, будет не более, чем 
$$
\sum_{i=1}^{\frac{C_6MC_4(W)}{C_5k}}C_4(W) B^i.
$$

Для различных путей из $\{s_i\}$ их слова также различны, иначе, по свойству $4$ схем Рози, пути бы совпадали. Значит, этим путям мы поставили в соответствие различные пути в $G_k$. Поэтому верна оценка
$$
\#\{s_i\}\leq \sum_{i=1}^{\frac{C_6MC_4(W)}{C_5k}}C_4(W) B^i.
$$
\end{proof}

\begin{corollary} \label{sl7_7}
Количество различных облегчённых нумерованных схем, возникающих при детерменированной эволюции, конечно для $W$.
\end{corollary}

\begin{lemma} \label{Lm7_8}
Существует такая константа $C_8$, что если $S_1$ и $S_2$ -- нумерованные схемы Рози для $W$ и $S_2=\Evol(S_1)$,
то масштаб схем $S_1$ и $S_2$ относятся не более, чем в $C_8$ раз.
\end{lemma}
\begin{proof}
Для некоторого $C_3$ выполняется $P_2(k)\leq C_3k$.
Если $v$ -- опорное ребро схемы $S_2$, то ему соответствует симметричный путь $s$ по рёбрам схемы $S_1$, причём $F(s)=F(v_1)$.
Путь $s$ задаётся в $S_1$ номерами его рёбер.

Номера рёбер пути $s$ однозначно определяются по следующему набору: \{Облегчённая нумерованная схема $S_1$, множество пар плохих рёбер в $S_1$(задаваемых номерами этих рёбер), облегчённая нумерованная схема $S_2$, номер ребра $v$ в схеме $S_2$\}.

Из \ref{sl7_7} следует, что таких наборов для слова $W$ конечное число. Следовательно, для $W$ существует такая константа $C_9$, что путь $s$ не длиннее $C_9$ рёбер (для всех $S_1$, $S_2$ и $v$).
А значит, $F(s)\leq C_9C_3M$, где $M$ -- масштаб схемы $S_1$, иначе для некоторого опорного ребра $v_0$ графа $S_1$ будет
$F(v_0)\sqsubseteq _{C_9+1}F(s)$ и, по свойству $4$ схем Рози, $v_0\sqsubseteq _{C_9+1}s$.

Неравенство $|F(v)|=|F(s)|\geq M$ очевидно.
\end{proof}

\begin{lemma} \label{LmC_7}
Существует такая константа $C_{10}$, что для любой схемы $S$ длины всех слов на рёбрах этой схемы не превосходят $C_{10}M$, где $M$ -- масштаб схемы.
\end{lemma}
\begin{proof}
Достаточно доказать существования такой константы для передних слов, для задних доказательство не будет отличаться.
Для некоторого $C_3$ выполняется $P_2(k)\leq C_3k$.

Для рёбер, выходящих из собирающих вершин, существование такой константы следует из леммы \ref{Lm4}.
Рассмотрим произвольное ребро $v$, выходящее из раздающей вершины. По свойствам схем Рози, в $S$ есть допустимый путь $l_v$, проходящий через $v$.

Рассмотрим $l'_v$ -- минимальный симметричный подпуть этого пути, проходящий через $v$. Его рёберная запись имеет вид
$$
v_1v_2\dots v \dots v_n.
$$
Так как $l'_v$ -- минимальный, то среди рёбер $v_1v_2\dots v$ ровно одно выходит из собирающей вершины (а именно $v_1$). Также из минимальности следует, что среди рёбер $v\dots v_n$ ровно одно входит в раздающую вершину (а именно $v_n$).
Тогда $l'_v$ проходит не более, чем по двум опорным рёбрам.
А так как его слово является подсловом $W$, то из свойства $4$ для схем его длина $|F(l'_v)|$ не более, чем $2C_3M$.
Осталось заметить, что $F(v)\sqsubseteq F(l'_v)$.
\end{proof}

\begin{lemma} \label{Lmdlin}
Существует такая константа $C_{11}$, что если $S$ -- схема Рози для $W$, а $s_0$ -- допустимый путь, то $|F(s_0)|\geq C_{11}MN$, где $M$ -- масштаб схемы, а $N$ -- количество рёбер пути.
\end{lemma}
\begin{proof}
Пусть $v_1$ -- первое ребро произвольного допустимого пути $s$. Слово $F(v_1)$ является началом $F(s)$. Естественное продолжение вправо ребра $v_1$
-- симметричный путь, принадлежащий $\{s_i\}$.
По лемме \ref{Lm5} $F(s_1)$ является специальным слева, а его длина не меньше, чем $M$.
Аналогично доказывается, что $F(s)$ оканчивается на специальное справа слово длины не менее $M$.

Следовательно, слово $F(s)$ соответствует пути $l$ по рёбрам графа $G_{[\frac{M}{2}]}$, при этом $l$ начинается в левой специальной вершине, а кончается в правой специальной вершине.

Рассмотрим путь $s_0$. Среди его $N-1$ промежуточных вершин найдутся либо половина раздающих, либо половина собирающих.
Не умаляя общности, будем считать, что выполнено первое. Рассмотрим множество допустимых путей, являющихся началами $s_0$.
Их хотя бы $\frac{N-1}{2}$. Их можно упорядочить так, чтобы слово каждого следующего пути являлось началом слова предыдущего.
$$
F(s_0)\succeq F(s_1)\succeq \dots \succeq F(s_k), \:\:k\geq \frac{N-1}{2}.
$$

Словам $F(s_i)$ соответствуют пути $l_i$, начинающиеся в одной собирающей вершине, кончающиеся в раздающих вершинах и такие,
что каждый следующий путь -- начало предыдущего.
В $G_{[\frac{M}{2}]}$ не более $C_7$ специальных вершин (см. лемму \ref{cas}). Следовательно, через одну из раздающих вершин путь $l_0$
проходит не менее $\frac{N-1}{C_7}$ раз.

Из леммы \ref{sdvig} следует, что между последовательными посещениями одной и той же вершины графа $G_{[\frac{M}{2}]}$ в пути $l_0$ должно быть не менее $[\frac{M}{2}]C_5$ рёбер. Значит, $l_0$ проходит не менее, чем по $[\frac{M}{2}]C_5\frac{N-1}{C_7}$ рёбрам. Осталось сказать, что длина $l_0$
не более $|F(s_0)|$.
\end{proof}

\begin{lemma} \label{LmC_12}
Существует такая константа $C_{12}$, что для любой схемы $S$ и любого пути $s$ длина слова этого пути $F(s)$ не превосходит $C_{12}NM$, где $M$ -- масштаб схемы, а $N$ -- количество рёбер пути.
\end{lemma}
\begin{proof}
Если симметричный путь проходит по $N$ рёбрам, то длина его слова не превосходит $Nl_{max}$, где $l_{\max}$ -- максимальная из длин слов на рёбрах.
В силу леммы \ref{LmC_7}, $l_{max}\leq C_{10}M$. Лемма доказана.
\end{proof}

\section{Построение оснасток.}

В этом разделе $W=\psi(\varphi^{\infty}(a_1))$, где $\varphi$ -- примитивный морфизм, а $W$ не является заключительно периодичным.

\begin{definition}
Пусть $A\sqsubseteq W$, $S$ -- схема Рози для $W$. Множество симметричных путей в $S$, слова которых являются подсловами $A$, обозначим $S(A)$.
Если $S(A)$ непусто, в этом множестве есть максимальный элемент относительно сравнения $\sqsubseteq $, который будем называть {\it нерасширяемым путём}. Очевидно, для каждого пути из $S(A)$ есть путь, который содержит его и является нерасширяемым. Назовём этот путь {\it максимальным расширением}.
\end{definition}

\begin{lemma} \label{lm72}
Если $s$ -- нерасширяемый путь в $S(A)$, то $|A|-2l_{\max}\leq |F(s)|\leq |A|$, где $l_{\max}$ -- максимальная из длин слов на рёбрах $S$.
\end{lemma}
\begin{proof}
Неравенство $|F(s)|\leq |A|$ очевидно, так как $F(s)\sqsubseteq A$.

Пусть $A$ имеет вид $u_1F(s)u_2$. Если $|F(s)|<|A|-2l_{\max}$, то либо $|u_1|>l_{\max}$, либо $|u_2|>l_{\max}$. Не умаляя общности, предположим второе. По лемме \ref{Lmtochn}
существует такой допустимый путь $s_1$, что началом его является путь $s$, а его слово имеет вид $F(s_1)=F(s)u_2u_3$.
Пусть $s$ имеет рёберную запись $v_1v_2\dots v_n$, а $s_2$ -- рёберную запись $v_1v_2\dots v_nv_{n+1}\dots v_{n+k}$. Рассмотрим путь, образованный $s$ и естественным продолжением вправо ребра $v_{n+1}$. Слово этого пути является подсловом $A$, следовательно, $s$ не является нерасширяемым в $S(A)$.
\end{proof}

\begin{lemma} \label {Lmekviv}
Существует такая константа $C_{13}$, что для любой схемы Рози $S$ слова $W$, слов $X$, $Y$, $Z$ таких, что $XYZ\sqsubseteq W$ а также $\min\{|X|,|Y|,|Z|\}>C_{13}M$, где $M$ -- масштаб схемы $S$, и симметричного пути $l$ в схеме $S$ следующие два условия эквивалентны:
\begin{enumerate}
\item $l\in S(XYZ)$
\item Существует такой симметричный путь $l'$, что $l\sqsubseteq l'$, $l'$ разбивается на три части $l'=xyz$, где $xy$ -- нерасширяемый путь для $S(XY)$, $y$ -- нерасширяемый путь для $S(Y)$, $yz$ -- нерасширяемый путь для $S(YZ)$.
\end{enumerate}
\end{lemma}

\begin{proof}
Согласно лемме \ref{sdvig} существует такая константа $C_5$, что любое подслово сверхслова $W$ длины $n$ не может иметь два различных вхождения в подслово $W$ длины, не превосходящей $n+C_5n$.

Сначала докажем, что при достаточно больших $C_{13}$ $1$ влечёт $2$.

Будем доказывать, что в качестве $l'$ можно взять максимальное расширение $l$ в $S(XYZ)$.
По лемме \ref{lm72}, $F(l')\geq |XYZ|-2l_{\max}$, а из леммы \ref{LmC_7} следует $l_{\max}\leq C_{10}M$.

Рассмотрим в $l'$ такое максимальное начало $l_1$, что $l_1\in S(XY)$.
Докажем, что $l_1$ -- нерасширяемый в $S(XY)$. Пусть $F(l')=F(l_1)u_1$. Рассмотрим его вхождение в $XYZ$, как подслова $u$.

Так как $l'$ -- нерасширяемый в $XYZ$, то 
$$
XYZ=d_1F(l')d_2,
$$
где $|d_1|\leq l_{\max}$, $|d_1|\leq l_{\max}$.
$$
XYZ=d_1F(l_1)u_1d_2.
$$

Есть два случая:
\begin{enumerate}
\item $|d_1F(l_1)|\leq |XY|$. В этом случае получаем $|d_1F(l_1)|\geq  |XY|-l_{\max}$, иначе у $l'$ есть начало $l'_1$ такое, что $l_1\sqsubseteq l'_1$ и
$|F(l'_1)|\leq |F(l_1)|+l_{\max}$, а стало быть, $F(l'_1)\sqsubseteq XY$.

Тогда $|F(l_1)|\geq |XY|-2l_{\max}$.
Предположим, что $l_1$ расширяемый в $XY$. Тогда для некоторого $l''_1$ выполняется $l_1\sqsubseteq l''_1$ и $F(l''_1)\sqsubseteq XY$.
Можно считать, что $l_1$ -- конец или начало $l''_1$.
	\begin{enumerate}
	\item $l''_1=l_bl_1$. Тогда $XY=e_1B(l_b)F(l_1)e_2$. Если выполнено
	$$
	2C_{10}M<C_5(2C_{13}M-2C_{10}M),
	$$
	то у $F(l_1)$ только одно вхождение в $XY$, то есть $d_1=e_1B(l_b)$ и 
	$$
	XYZ=e_1B(l_b)F(l')d_2.
	$$
	Тогда симметричный путь $l_bl'\in S(XYZ)$ и путь $l'$ расширяемый.
	\item $l''_1=l_1l_e$. Тогда $XY=e_1F(l_1)F(l_e)e_2$. Если выполнено
	$$
	2C_{10}M<C_5(2C_{13}M-2C_{10}M),
	$$
	то у $F(l_1)$ только одно вхождение в $XY$, то есть $e_1=d_1$ и $$XY=d_1F(l_1)F(l_e)e_2.$$
	
	Следовательно, $F(l_1l_e)\sqsubseteq XY$.
	Кроме того, $F(l_1l_e)$ является началом $F(l')$. Тогда, по $2$-му свойству схем Рози, $l_1l_e$ является началом $l'$. Противоречие.
	\end{enumerate}

\item $|d_1F(l_1)|>|XY|$. В этом случае $d_1F(l_1)=XYd_3$, где $|d_3|<|d_1|\leq l_{\max}$.
Так как
$F(l_1)\sqsubseteq _2XYd_3\sqsubseteq W$, $|XYd_3|<|XY|+C_{10}M$, $|F(l_1)|>|XY|-C_{10}M$, то при 
$$
2C_{10}M<C_5(2C_{13}M-C_{10}M)
$$
получаем противоречие.
\end{enumerate}

Аналогично, если $l_2$ -- такой максимальный конец пути $l'$, что $l_2\in S(YZ)$, то $l_2$ -- нерасширяемый в $S(YZ)$.

Докажем несложный факт: пути $l_1$ и $l_2$ внутри $l'$ перекрываются, то есть в рёберных записях $l_1$ и $l_2$ суммарно больше рёбер,
чем в рёберной записи пути $l'$. В самом деле, иначе возьмём $s$ -- естественное продолжение влево последнего ребра $l_1$.
Тогда $|F(l')|+|F(s)|\geq F(l_1)+F(l_2)$, то есть $$|XYZ|\geq (|XY|-2C_{10}M)+(|YZ|-2C_{10}M)-C_{10}M,$$
что не выполняется при достаточно больших $C_{13}$.


Итак, можно считать, что $l_1=xy$, $l_2=yz$, $l'=xyz$. Для доказательства импликации осталось показать, что путь $y$ -- нерасширяемый путь в $S(Y)$.

Имеем $XYZ=d_1B(x)F(y)F(z)d_2$, $XY=d_1B(x)F(y)e_1$ и $YZ=e_2F(y)F(z)d_2$, где $\max\{|d_1|,|d_2|,|e_1|,|e_2|\}\leq l_{\max}$.
Отсюда 
$$
|Y|=|e_1|+|e_2|+|F(y)|,
$$
то есть $Y=e_2F(y)e_1$.

Если $y$ расширяемый в $S(Y)$, то, не умаляя общности, $y$ -- начало пути $yl_e$ такого, что $F(y)F(l_e)\sqsubseteq Y$, то есть $Y=f_1F(y)F(l_e)f_2$.
Если выполняется неравенство
$$
2MC_{10}<C_5(C_{13}M-2MC_{10}),
$$
то $F(y)$ имеет ровно одно вхождение в $Y$, стало быть, $e_2=f_1$ и $e_1=F(l_e)f_2$.

Тогда $XY=d_1B(x)F(y)F(l_e)f_2$, то есть $l_1l_e\sqsubseteq S(XY)$. Кроме того, $F(l_1l_e)$ является началом слова $F(l_1)e_1=B(x)F(y)e_1$, которое,
в свою очередь, является началом $B(x)F(Y)F(z)=F(l')$. Следовательно, $l_1l_e$ -- начало $l'$. Противоречие.

Теперь докажем, что при достаточно больших $C_{13}$ $2$ влечёт $1$.

Так как $xy$ -- нерасширяемый путь в $S(XY)$, а $y$ -- нерасширяемый путь в $Y$, $XY$ имеет вид $XY=d_1B(x)F(y)d_2$, а $Y$ имеет вид $Y=e_1F(y)e_2$, причём $|d_i|,|e_i|\leq l_{\max}$.
Если $C_{10}M<C_5(C_{13}M-C_{10}M)$, то $d_2=e_2$.

Аналогично, $YZ=f_1F(y)F(z)f_2$, где $f_1=e_1$. Тогда $XYZ=d_1B(x)F(y)F(z)d_2$, а значит, $F(l)\sqsubseteq F(l')=F(xyz)\sqsubseteq XYZ$.
\end{proof}

\begin{definition} \label{d7_4}
{\it Набор проверочных слов порядка $k$} -- это набор слов $\{q_i\}$, в который входят $\psi(\varphi^k(a_i))$ для всех букв алфавита $\{a_i\}$, а также
$\psi(\varphi^k(a_ia_j))$ для всевозможных пар последовательных букв слова $\varphi^{\infty}(a_1)$.
\end{definition}

Кроме того, для любого $k$ в наборе проверочных слов порядка $k$ одно и то же число подслов.

\begin{proposition} \label{pr7_5}
Существуют такие положительные $C_{14}$, $C_{15}$ и $\lambda_0>1$, что в наборе проверочных слов с номером $k$ длина любого слова $q$ удовлетворяет
двойному неравенству $C_{14}\lambda_0^k<|q|<C_{15}\lambda_0^k$.
\end{proposition}
\begin{proof}
Существуют константы $k_1$ и $k_2$ такие, что, с одной стороны, в $\varphi^k(a_i)$ среди любых $k_1$ идущих подряд букв есть буква, которая не удаляется при действии $\psi$, а с другой стороны, образ $\psi$ от каждой буквы -- это слово не длиннее $k_2$. Осталось применить лемму \ref{Lm6_4}. 
\end{proof}

\begin{definition}
{\it Оснастка} $(S,k)$ порядка $k$ определяется для пронумерованной схемы Рози $S$.
Для получения оснастки берётся набор проверочных слов порядка $k$, для каждого проверочного слова $v_i$ рассматривается набор $S(q_i)$. Каждый путь в схеме задаётся упорядоченным набором чисел -- номеров рёбер схемы. Таким образом, $S(q_i)$ задаётся множеством таких упорядоченных наборов, а оснастка получается, если взять такие наборы для всех проверочных слов и облегчённую нумерованную схему $S$.
\end{definition}

\begin{definition}
{\it Размер оснастки} -- максимальная длина (в рёбрах) по всем путям из $S(q_i)$ для всех $q_i$ -- проверочных слов порядка $k$.
\end{definition}

\begin{lemma} \label{size}
Существуют такие положительные $C_{16}$, $C_{17}$ и $C_{18}$, что для любой схемы $S$ размер оснастки $(S,k)$ заключён между
$C_{16}\frac{\lambda_0^k}{M}-C_{17}$ и $C_{18}\frac{\lambda_0^k}{M}$.
\end{lemma}
\begin{proof}
Длина каждого проверочного слова, согласно \ref{pr7_5}, хотя бы $C_{14}\lambda_0^k$. Если $q_i$ -- проверочное, то длина слова нерасширяемого в
$S(q_i)$ пути не менее $C_{14}\lambda_0^k -- 2C_{10}M$. А длина этого пути в рёбрах составляет, согласно лемме \ref{lm72}, не менее $\frac{C_{14}\lambda_0^k - 2C_{10}M}{C_{12}M}$.

С другой стороны, длина слова нерасширяемого пути не может быть больше $C_{15}\lambda_0^k$, а его длина в рёбрах, как следует из леммы \ref{Lmdlin}, не может быть более $\frac{C_{15}\lambda_0^k}{C_{11}M}$.
\end{proof}

\begin{corollary} \label{wts}
Существуют такие $C_{19}$, $C_{20}$ и $C_{21}$, что если размер оснастки $(S,k)$ больше $C_{19}$, то
\begin{enumerate}
\item Длины всех проверочных слов составляют не менее $C_{13}M$.
\item Для любой хорошей тройки рёбер существует проходящий через неё путь, слово которого содержится во всех проверочных словах.
\item Размер оснастки $(S,k)$ относится к размеру оснастки $(\Evol(S),k)$ не более, чем в $C_{20}$ раз.
\item Размеры оснасток $(S,k)$ и $(S,k+1)$ отличаются не более, чем в $C_{20}$ раз.
\item Размер оснастки $(S,k+C_{21})$ больше размера оснастки $(S,k)$ хотя бы в два раза.
\end{enumerate}
\end{corollary}
\begin{proof}

\begin{enumerate}
\item Из лемм \ref{size} и \ref{pr7_5} следует, что если $x$ -- размер оснастки, а $|q|$ -- размер проверочного слова $q$, то
$|q|>C_{14}\lambda_0^k>C_{14}(C_{18}xM)$.
\item Симметричный путь, получающийся естественным расширением хорошей тройки рёбер сначала вправо, а потом влево,
является допустимым и его слово имеет длину не более $3l_{\max}\leq 3C_{10}M$.
Если проверочное слово $q$ имеет длину хотя бы $3C_{10}MC_5$, где $C_5$ -- показатель рекуррентности, то подсловом $q$ является и слово рассматриваемого пути. Как видно из предыдущего пункта, выбором $C_{19}$ этого легко добиться.
\item Пусть $M$ и $M_1$ -- масштабы схемы $S$ и $\Evol(S)$, а $x$ и $x_1$ -- размеры оснасток. Согласно лемме \ref{Lm7_8}, $M_1\leq C_8M$.
Тогда $x_1>C_{16}\frac{\lambda_0^k}{C_8M}-C_{17}>C_{16}\frac{C_{18}x}{C_8}-C_{17}$, что при достаточно большом $x$ больше, чем $C_{16}\frac{C_{18}x}{2C_8}$.
\item Пусть $x$ и $x_1$ -- размеры оснасток $(S,k)$ и $(S,k+1)$.

Тогда $C_{16}\frac{\lambda_0^{k+1}}{M}-C_{17}<x_1<C_{18}\frac{\lambda_0^{k+1}}{M}$. С другой стороны, $\frac{x+C_{17}}{C_{16}}>\frac{\lambda_0^{k}}{M}>\frac{x}{C_{18}}$.

Таким образом, $C_{16}\frac{\lambda_0x}{C_{18}M}-C_{17}<x_1<C_{18}\frac{\lambda_0(x+C_{17})}{C_{16}M}$.

\item Пусть $x$ и $x_1$ -- размеры оснасток $(S,k)$ и $(S,k+C_{21})$.

Тогда $C_{16}\frac{\lambda_0^{k+C_{21}}}{M}-C_{17}<x_1$. С другой стороны, $\frac{\lambda_0^{k}}{M}>\frac{x}{C_{18}}$.

Таким образом, $x_1>C_{16}\frac{\lambda_0^{C_{21}}x}{C_{18}M}-C_{17}$, что не меньше $2x$ при достаточно большом $C_{21}$.
\end{enumerate}
\end{proof}

\begin{lemma} \label{lm79}
Если размер оснастки $(S,k)$ хотя бы $C_{19}$, то оснастка $(S,k+1)$ является функцией оснастки $(S,k)$.
\end{lemma}
\begin{proof}
Рассмотрим проверочное слово порядка $k+1$.

Оно имеет вид $\psi (\varphi ^k(\varphi (a)))$, где $a$ -- либо одна, либо две буквы. В любом случае, $\varphi (a)$ - подслово $\varphi ^{\infty}$. Таким образом, проверочное слово порядка $k+1$ разбивается на блоки -- проверочные слова порядка $k$, причём любые два последовательных блока образуют проверочное слово порядка $k$. (заметим, что то, на какие типы слов порядка $k$ разбивается $\psi (\varphi ^{k+1}(a))$, не зависит от номера $k$. {\it Типом слова} $\psi (\varphi ^k(a_i))$ называется буква $a_i$.)

Пусть $A_1A_2\dots A_m$ -- проверочное слово порядка $k+1$, $A_i$ -- проверочные слова порядка $k$. 
Согласно \ref{wts}, длины слов $A_i$ удовлетворяют условию леммы \ref{Lmekviv}. Применим эту лемму к словам $A_1$,$A_2$,$A_3$. Так как свойство пути быть {\it нерасширяемым} определяется по оснастке, то по оснастке можно определить, какие пути принадлежат $S(A_1A_2A_3)$ (а точнее, какие пути в соответствующей облегчённой пронумерованной схеме). После этого применим лемму \ref{Lmekviv} к словам $A_1A_2$, $A_3$, $A_4$ и определим, какие пути принадлежат $S(A_1A_2A_3A_4)$. Потом определим, какие пути принадлежат $S(A_1A_2A_3A_4A_5)$, и так далее, наконец определим, какие пути схемы $S$ принадлежат $S(A_1A_2\dots A_m)$.
Делая так для всех проверочных слов порядка $k+1$, получим что хотели.

\end{proof}

\begin{lemma} \label{Lmvtr}
Если размер оснастки $(S,k)$ не менее $C_{19}$, то по оснастке $(S,k)$ однозначно определяются плохие пары рёбер и оснастка $(\Evol(S),k)$.
\end{lemma}

\begin{proof}

Из следствия \ref{wts} очевидно, что по оснастке определяется множество хороших и плохих тройки рёбер: по хорошим тройкам рёбер пути из оснастки проходят, а по плохим -- нет (так как пути в оснастках допустимые).
Следовательно, определяется и нумерованная облегчённая схема $\Evol(S)$. Симметричные пути в $\Evol(S)$ соответствуют некоторым симметричным путям в $S$ с теми же словами. При этом мы можем указать соответствие, глядя лишь на облегчённые схемы.
Таким образом, для каждого симметричного пути в облегчённой нумерованной схеме $\Evol(S)$ по оснастке $(S,k)$ можно определить, каким из $\Evol(S)(A_i)$ этот путь принадлежит. (Здесь $A_i$ -- проверочные слова порядка $k$).
\end{proof}

Симметричным путям в $\Evol(S)$ соответствуют симметричные пути в $S$ с теми же словами, при этом в $\Evol(S)$ пути не длиннее, чем 
соответственные пути в $S$. Стало быть, размер оснастки $(\Evol(S),k)$ не более, чем размер оснастки $(S,k)$.

{\bf Завершение доказательства теоремы \ref{Th1}.}

Построим последовательность оснасток по следующему правилу. Возьмём $T=2 C_{19} C_{20}^{C_{21}}$.
Построим первую оснастку для такого порядка проверочных слов, чтобы размер оснастки был хотя бы $T$. Каждая следующая оснастка определяется по предыдущей. Если её размер менее $T$, то по оснастке вида $(S,k)$ строится оснастка вида $(S,k+1)$.  Иначе делается операция перехода от оснастки вида $(S,k)$ к оснастке вида $((\Evol(S)),k)$. Таким образом, размер оснастки никогда не упадёт ниже $C_{19}$, значит, каждый шаг определён однозначно. С другой стороны, размер оснастки не поднимается выше $C_{20}T$. Значит, различных оснасток в последовательности конечное число (так как различных нумерованных облегчённых схем тоже конечно.) То есть с некоторого момента оснастки повторяются периодично. При этом не может быть так, что на каждом шаге повышается порядок проверочных слов. Значит, время от времени в последовательности появляются оснастки с новыми схемами. А значит, и протокол эволюции с некоторого места периодичен.

\section{Переход к произвольным равномерно рекуррентным подстановочным словам}
Пусть $\varphi$ -- произвольный морфизм из $A^*$ в $A^*$, $h$ -- морфизм из $A^*$ в $B^*$.

\begin{lemma} \label{perehod}
Если непериодичное слово $W=h(\varphi ^{\infty}(a))$ является равномерно рекуррентным, 
то для некоторого алфавита $D$ существуют такие морфизмы $\rho :D^* \to D^*$ и $g:D^* \to B^*$, где $\rho$ -- примитивный морфизм,
что множества подслов $W$ и $g(\rho ^{\infty}(d))$ для некоторой буквы $d\in D$ совпадают.
\end{lemma}

Для доказательства этой леммы нам потребуются следующие определения. Слово $w\in A^*$ будем называть $\varphi-${\it ограниченным}, если последовательность
$$w,\varphi(w),\varphi^2(w),\varphi^3(w),\dots$$
 периодична начиная с некоторого момента.
В противном случае, $|\varphi^n(w)|\rightarrow \infty$ при $n\rightarrow \infty$ и слово $w$ называется {\it $\varphi-$растущим}.
Очевидно, слово является $\varphi-$ограниченным тогда и только тогда, когда оно состоит из $\varphi-$ограниченных букв.

Назовём слово $w$ {\it $\varphi-h-$уничтожаемым}, если $h(\varphi^n(w))=\Lambda$ для любого $n\geq 0$. Легко видеть, что слово является $\varphi-h-$уничтожаемым тогда и только тогда, когда оно состоит из $\varphi-h-$уничтожаемых букв.
Далее, можно считать, что в алфавите $A$ нет $\varphi-h-$уничтожаемых букв:

\begin{proposition} \label{prerase}
Пусть $A'$ -- множество не $\varphi-h-$уничтожаемых букв.
Рассмотрим морфизм $\varphi ':A'^* \to A'^*$, определяемый как
$\varphi'(a_i)=$ "$\varphi(a_i)$ с вычеркнутыми $\varphi-h-$уничтожаемыми буквами".
Тогда слова $h(\varphi ^{\infty}(a))$ и
$h(\varphi'^{\infty}(a))$ совпадают.
\end{proposition}

Доказательство очевидно.

\begin{lemma}[Ehrenfeucht, Rozenberg] \label{eh}
Следующие три условия эквивалентны:
\begin{enumerate}
\item Слово $\varphi ^{\infty}(a)$ не является равномерно рекуррентным.
\item В $\varphi ^{\infty}(a)$ есть бесконечно много $\varphi -$ограниченных подслов.
\item Существует непустое $w\in A^*$ такое, что $w^n$ является подсловом $\varphi ^{\infty}(a)$ для любого $n$.
\end{enumerate}
\end{lemma}
Доказательство импликации $1\rightarrow 2$ можно найти в работе \cite{Pr}, импликации $2\rightarrow 1$ -- в работе \cite{Ehren},
а импликация $3\rightarrow 1$ очевидна.

Предположим, что слово $\varphi ^{\infty}(a)$ не является равномерно рекуррентным. Тогда некоторое $w$ встречается в нём сколь угодно много раз подряд.
Так как $w$ не является $\varphi-h$-уничтожаемым (см. \ref{prerase}), то для некоторого $k$ слово $h(\varphi^k(w))$ не является пустым словом.
Тогда слово $h(\varphi^k(w))$ встречается в $W$ сколь угодно много раз подряд, и из равномерной рекуррентности $W$ следует его периодичность.

Таким образом, мы можем считать, что $\varphi ^{\infty}(a)$ равномерно рекуррентно и, следовательно, в нём лишь конечное число $\varphi-$ограниченных
слов.

Пусть $I_{\varphi}$ -- множество всех $\varphi-$растущих букв, $B_{\varphi}$ -- множество $\varphi-$ограниченных слов (включая пустое).
В силу \ref{eh}, $B_{\varphi}$ конечно.
Рассмотрим (конечный) алфавит $C$, состоящий из символов $[twt']$, где $t$ и $t'$ буквы из $I_{\varphi}$, а 
$w$ -- слово из $B_{\varphi}$ и слово $twt'$ является подсловом $\varphi^{\infty}(a)$.

Определим морфизм $\psi:C^* \to C^*$ следующим образом:
$$
\psi([twt'])=[t_1wt_2][t_2wt_3]\dots[t_kw_kt_{k+1}],
$$
где $\varphi(tw)=w_0t_1w_1t_2\dots t_kw'_k$, слово $\varphi(t')$ начинается с $w''_kt_{k+1}$ и $w_k=w'_kw''_k$ (cлова $w_i$, $w'_k$ и $w''_k$ принадлежат $B_{\varphi} $).

Также определим $f:C^*\to C^*$ по правилу
$$
f([twt'])=h(tw).
$$

\begin{proposition} \label{mor_hor}
 Все буквы алфавита $C$ являются $\psi-$растущими и ни одна не является $\psi-f-$уничтожаемой.
\end{proposition}

\begin{proof}
Очевидно, в образе $\varphi(t)$ от произвольной буквы $t\in I_{\varphi}$ содержится хотя бы одна буква из $I_{\varphi}$. Более того, в слове $\varphi ^n(t)$ для некоторого $n$ содержатся хотя бы две буквы из $I_{\varphi}$, иначе $\varphi ^n(t)=w_nt_{i_n}v_n$, где $w_n$ и $v_n$ принадлежат $B_{\varphi }$.
Таких слов, согласно \ref{eh}, конечное число. Стало быть, буква $t$ не является $\varphi -$растущей.

Далее, если в $\varphi^n(t)$ встречается $k$ букв из $I_{\varphi }$, то длина $|\psi ^n([twt'])|$ не меньше, чем $k$.
Следовательно, буква $[twt']$ алфавита $C$ является $\psi-$растущей.

Предположим, что буква $[twt']$ является $\psi-h-$уничтожаемой. Тогда в некоторая степень морфизма $\psi$ от этой буквы является словом из $C^*$
длины не менее $2$. Значит, для некоторых $t_i$,$w_i$ слово вида $[t_1w_1t_2][t_2w_2t_3]$ является $\psi-f-$уничтожаемым. 
Буква $t_2$ не является $\varphi-h-$уничтожаемой. Значит, существует последовательность букв $\{a_i\}$ такая,
что $t_2=a_0$, $a_{i+1}$ содержится в $\varphi(a_i)$ и $h(a_k)\not =\Lambda$.

Докажем по индукции следующий факт: $\psi^n([t_1w_1t_2][t_2w_2t_3])$ содержит подслово вида $[t_{i_n}w_{i_n}t_{j_n}][t_{j_n}w_{j_n}t_{k_n}]$
такое, что в $w_{n_1}$, $t_{n_2}$ или в $w_{n_2}$ содержится $a_n$. пусть это верно для $n$, докажем для $n+1$. Возможны следующие случаи:
\begin{enumerate}
\item Пусть $a_n$ содержится в $w_{i_n}$. Тогда $\varphi(t_{i_n}w_{i_n})$ содержит подслово вида $t_{i_{n+1}}w_{i_{n+1}}$, где $a_{n+1}$
содержится в $w_{i_{n+1}}$. Тогда $\psi([t_{i_n}w_{i_n}t_{j_n}])$ содержит букву вида $[t_{i_{n+1}}w_{i_{n+1}}t_{j_{n+1}}]$, где $w_{i_{n+1}}$
содержит $a_{n+1}$. Так как у буквы $[t_{i_{n+1}}w_{i_{n+1}}t_{j_{n+1}}]$ в 
$\psi^{n+1}([t_1w_1t_2][t_2w_2t_3])$ есть левый либо правый сосед, искомое подслово встретится.
\item Если $a_n$ содержится в $w_{j_n}$, то доказательство полностью аналогично.
\item Пусть $a_n=t_{j_n}$, $\varphi(t_{j_n})=w_0t_1w_1t_2\dots$ Если $a_{n+1}$ встречается в $w_0$, то $a_{n+1}$ содержится в ``средней части'' последней буквы слова
$\psi([t_{i_n}w_{i_n}t_{j_n}])$. Иначе $a_{n+1}$ содержится как ``первая буква'' или в ``средней части'' одной из букв слова $[t_{j_n}w_{j_n}t_{k_n}]$.
\end{enumerate}
Таким образом, в  $\psi^k([t_1w_1t_2][t_2w_2t_3])$ найдётся буква, которая не обнуляется морфизмом $f$.
\end{proof}

Пусть $\varphi^{\infty}(a)$ имеет вид $a_1w_1a_2\dots$, где $a_1,a_2\in I_{\varphi}$, $w_1\in B_{\varphi}$. Тогда, как легко заметить, для любого $n$ слово $h(\varphi ^n(a))$ является началом слова $f(\psi^n([a_1w_1a_2]))$, следовательно, слова $W$ и $f(\psi^{\infty}([a_1w_1a_2]))$ совпадают.

\begin{remark} Конструкция морфизма $\psi$ была рассмотрена в работе \cite{Pr}.
\end{remark}

Рассмотрим ориентированный граф $G_{\varphi}$, вершинами которого являются буквы алфавита $C$, и из $c_i$ ведёт стрелка в $c_j$ тогда и только тогда, когда $c_j$ содержится в $c_i$. Пусть $D$ -- сильносвязная компонента этого графа, до которой можно дойти по стрелочкам из $[a_1w_1a_2]$. Тогда $\psi$ можно рассматривать как морфизм из $D^*$ в $D^*$. Существует буква $d\in D$ и натуральное $k$ такое, что $\psi^k(d)$ начинается с $d$. Обозначим $\rho=\psi(k)$. Слово $\rho^{\infty}(d)$ является словом, все конечные подслова которого являются подсловами $\psi^{\infty}([a_1w_1a_2])$.
Стало быть, все конечные подслова $f(\rho^{\infty}(d))$ являются подсловами $W$. Очевидно, что $\rho$ является примитивным.

При этом образ при $f$ хотя бы одной буквы из $D$ не равен пустому слову. Стало быть, $f(\rho^{\infty}(d))$ бесконечное. А так как $W$ равномерно рекуррентное, то все его конечные подслова являются конечными подсловами $f(\rho^{\infty}(d))$. В самом деле, пусть $w$ -- подслово $W$. Тогда для некоторого $n$ любое подслово $W$ длины $n$ содержит $w$. Но в $W$ есть такое подслово, являющееся подсловом $f(\rho^{\infty}(d))$.

Это рассуждение завершает доказательство леммы \ref{perehod}.

\begin{corollary} Пусть $W$ -- морфическое равномерно рекуррентное бесконечное слово. Тогда эволюция схем Рози для этого слова периодична с предпериодом.
\end{corollary}

Это утверждение вытекает из теоремы \ref{Th1}, леммы \ref{perehod} и того соображения, что эволюция схем Рози однозначно задаётся множеством подслов сверхслова. 

\section{Равномерно-рекуррентные слова с периодичной эволюцией}
Пусть у равномерно-рекуррентного сверхслова $W$ эволюция  схем Рози, начиная с некоторого места, периодична. Докажем, что его язык подстановочен.

В самом деле, пусть на некотором шаге эволюции на рёбрах схемы Рози написаны слова $A_1, A_2,\dots, A_n$. После одного шага элементарной эволюции на рёбрах следующей схемы Рози будут написаны новые слова $B_1,B_2,\dots,B_m$, причём каждое из слов $B_i$ является мономом от слов вида $A_j$ (то есть является конкатенацией нескольких слов, возможно, одного). Очевидно, множество мономов (то есть то, какие слова из $B$ являются конкатенацией каких и в каком порядке слов из $A$) определяется только куском протокола эволюции на соответствующем шаге.

Пусть протокол периодичен начиная с $N-$го члена и длина периода равна $k$. Рассмотрим некоторое $n_0>N$. Если рассматривать набор слов на рёбрах схемы Рози с номером $n_0$, будет упорядоченное множество $A_1,A_2,\dots,A_M$ из $M$ слов. Если взять схему с номером $n_0+k$, то множество слов на её рёбрах будет также иметь мощность $M$. Обозначим это множество $B_1,B_2,\dots,B_M$. Каждое слово из этого набора будет неким мономом от слов вида $A_i$. Обозначим эти мономы $f_1,f_2,\dots,f_M$. Имеем $B_i=f_i(A_1,A_2,\dots,A_M)$. Аналогично, на рёбрах схемы с номером $n+2k$ написано множество слов
$f_1(B_1,B_2,\dots,B_M),f_2(B_1,B_2,\dots,B_M),\dots,f_M(B_1,B_2,\dots,B_M)$ для того же множества мономов $\{f_i\}$ и так далее.
Рассмотрим алфавит из $M$ букв: $a_1,a_2,\dots,a_M$ и два морфизма. Первый морфизм $\varphi(a_i)=f_i(a_1,a_2,\dots,a_M)$, а второй -- $h(a_i)=A_i$.

Легко видеть, что множество слов на рёбрах схемы с номером $n+kl$ -- это в точности $h(\varphi^l(a_1)),h(\varphi^l(a_2)),\dots,h(\varphi^l(a_M))$. Так как длина слов $h(\varphi^l(a_1))$ при увеличении $l$ стремится к бесконечности, все эти слова являются подсловами $W$, а само $W$ -- равномерно-рекуррентное, то множество подслов $W$ является множеством подслов $h(\varphi^{\infty}(a_1))$, что и требовалось доказать.

\end{document}